\pdfoutput=1
\documentclass[11pt]{article}

\usepackage[margin=1in]{geometry}
\IfFileExists{glyphtounicode.tex}{\input{glyphtounicode}\pdfgentounicode=1}{}
\usepackage{cmap}
\usepackage[T1]{fontenc}
\usepackage[utf8]{inputenc}
\usepackage{lmodern}
\usepackage{microtype}
\usepackage{amsmath,amssymb,amsthm,mathtools,mathrsfs,bm}
\usepackage{enumitem}
\usepackage{booktabs}
\usepackage{array}
\usepackage{cite}
\usepackage[bookmarks=false]{hyperref}
\hypersetup{
  colorlinks=true,
  linkcolor=blue,
  citecolor=blue,
  urlcolor=blue,
  pdftitle={Localized Centered Second-Chaos Operators},
  pdfauthor={Guangqian Zhao},
  pdfsubject={Localized centered second-chaos random operator estimates and pathwise cutoff convergence},
  pdfkeywords={centered Gaussian chaos, non-commutative Khintchine inequality, Schatten estimates, continuous frequency analysis, pathwise cutoff convergence, paracontrolled operators}
}
\newcommand{\doi}[1]{\href{https://doi.org/#1}{\textsc{doi}:\,\nolinkurl{#1}}}
\numberwithin{equation}{section}

\theoremstyle{plain}
\newtheorem{theorem}{Theorem}[section]
\newtheorem{proposition}[theorem]{Proposition}
\newtheorem{lemma}[theorem]{Lemma}
\newtheorem{corollary}[theorem]{Corollary}

\theoremstyle{definition}

\newtheorem{assumption}[theorem]{Assumption}

\newcommand{\R}{\mathbb R}

\newcommand{\E}{\mathbb E}

\newcommand{\cA}{\mathcal A}
\newcommand{\cB}{\mathcal B}
\newcommand{\cC}{\mathcal C}

\newcommand{\cE}{\mathcal E}
\newcommand{\cH}{\mathcal H}
\newcommand{\cK}{\mathcal K}
\newcommand{\cL}{\mathcal L}
\newcommand{\cT}{\mathcal T}
\newcommand{\Sch}{\mathfrak S}
\newcommand{\Dyd}{\mathbb D}
\newcommand{\Triads}{\mathfrak D}
\newcommand{\Mclass}{\mathfrak M}

\newcommand{\Xspace}{\mathbb X}
\newcommand{\Yspace}{\mathbb Y}
\newcommand{\dd}{\,\mathrm d}
\newcommand{\eps}{\varepsilon}
\newcommand{\wt}{\widetilde}

\newcommand{\Id}{\operatorname{Id}}

\newcommand{\Kdet}{K_{\mathrm{det}}}
\newcommand{\supp}{\operatorname{supp}}
\newcommand{\Sym}{\operatorname{Sym}}

\newcommand{\norm}[1]{\left\lVert #1\right\rVert}
\newcommand{\ip}[2]{\left\langle #1,#2\right\rangle}

\title{Localized Centered Second-Chaos Operators}
\author{Guangqian Zhao\thanks{E-mail: \texttt{zhaoguangqian@mail.ustc.edu.cn}}\\
School of Mathematical Sciences, University of Science and Technology of China,\\
Hefei, Anhui 230026, China}
\date{}

\begin{document}
\maketitle

\begin{abstract}
We prove a localized continuous-frequency operator estimate for centered Gaussian chaoses of order two.  The result applies to operator-valued centered second chaoses, including Wick-centered same-family variants, between Hilbert spaces.  In the model, two Gaussian frequency legs at scale $N$, an input leg at scale $Q$, and an output leg at scale $M$ are coupled through a soft incidence kernel; non-orthogonal Gaussian profiles are represented by covariance synthesis maps.

The proof combines four oriented flattenings, rectangular non-commutative Khintchine inequalities, soft-incidence Schatten bounds, and Sobolev--Besov dyadic summation.  The time lift gives $L^p$ operator convergence, while a Galerkin stabilization hypothesis gives pathwise full-cutoff convergence by the first Borel--Cantelli lemma.  Under $\mathcal G(N)\lesssim N^{-\Gamma}$ one obtains the window
\[
 \Gamma>\frac d2,
 \qquad
 s<\lambda+\Gamma-d,
 \qquad
 \max\{0,d-\Gamma\}<\sigma<\lambda+\Gamma-d.
\]
The theorem applies to the near-output Wick-centered branch of localized paracontrolled resonant products on $\mathbb R^d$.
\end{abstract}

\medskip
\noindent\textbf{Keywords.} Centered Gaussian chaos; non-commutative Khintchine inequality; Schatten estimates; continuous frequency analysis; soft incidence kernels; paracontrolled operators; pathwise cutoff convergence; power-law envelopes.

\medskip
\noindent\textbf{MSC 2020.} 60H15, 60H30, 35R60, 35Q40, 42B37.

\section{Introduction}
\label{sec:intro}

The discrete periodic counterpart of the estimate considered here has five layers: finite Wick centering, local Gaussian normalization, a four-flattening random tensor estimate, incidence counting, and finally time lift, dyadic summation, and Borel--Cantelli.  The same architecture has a localized continuous analogue on $\R^d$, but every discrete ingredient must be replaced by a continuous one.  The correspondence is summarized below.

\begin{center}
\begin{tabular}{@{}ll@{}}
\toprule
Periodic/discrete ingredient & Continuous full-space replacement \\
\midrule
independent Fourier coordinates & covariance Hilbert synthesis maps \\
finite random matrix & compact integral operator \\
lattice incidence $n=q+\ell+\eta$ & soft incidence $k(n-q-\ell-\eta)$ \\
finite flattening dimension & Schatten effective dimension \\
row/column degree counting & continuous Schur plus Hilbert--Schmidt interpolation \\
finite block stabilization & compact-shell Galerkin stabilization \\
first Borel--Cantelli on dyadic blocks & the same probability argument \\
\bottomrule
\end{tabular}
\end{center}

We use the following conventions throughout.  The notation $A\lesssim B$ means $A\le CB$, where $C$ may depend on the dimension, the fixed cutoff functions, the admissibility constants $c_{\rm ap},C_{\rm out}$, and the time horizon $T$, but never on the dyadic scales or on the Galerkin cutoff.  The notation $A\lesssim_\kappa B$ allows dependence on the displayed auxiliary parameter $\kappa$.  Dyadic scales are inhomogeneous and begin at one, and all dyadic sums are over $\Dyd$ unless otherwise stated.  For Hilbert spaces $U,V$, $\Sch_r(U,V)$ denotes the Schatten class, $\cK(U,V)$ the compact operators, and all Hilbert tensor products are completed Hilbert tensor products.  We also use the standard convention $\Sch_\infty(U,V)=\cL(U,V)$, with the operator norm.

The continuous replacement in the table is local in physical space: it uses the soft incidence kernel generated by a physical cutoff, not an unweighted translation-invariant global kernel on $\R^d$.  The estimate isolates the centered second-chaos component that remains after the deterministic covariance contraction has been separated.  In applications, the Gaussian frequency legs are encoded by covariance synthesis maps, the four Schatten flattening bounds are verified for the localized kernel, and the dyadic Sobolev--Besov summation gives the operator bound; Corollary~\ref{cor:paracontrolled-app} records this use for a near-output paracontrolled Duhamel branch.

The argument is in the spirit of the row/column and flattening estimates of Deng--Nahmod--Yue~\cite{DNY}.  The closest periodic/discrete mixed paracontrolled framework is the author's preprint~\cite{ZhaoRTDD}.  The present paper keeps only the centered second-chaos branch, replaces lattice incidence by a localized continuous-frequency soft incidence, and uses covariance synthesis maps for non-orthogonal smooth frequency profiles.

The proof proceeds through Wick centering, covariance pullback, four flattenings, soft-incidence Schatten bounds, dyadic assembly, and Borel--Cantelli convergence.

Smooth frequency projections are not assumed independent.  The two stochastic legs are represented by bounded maps from frequency $L^2$ spaces into covariance Hilbert spaces, and complete Wick centering leaves a second homogeneous Wiener-chaos object.  The abstract estimate is not tied to the later Volterra form: it starts from a finite-rank operator-valued centered second chaos
\[
 Z_{\mathbf H}=
 \sum_{i,j}g_i h_j H_{ij}:\cC\to\cE,
\]
or, in the same-family case, from
\[
 \sum_{i,j}H^{\rm sym}_{ij}\bigl(G_iG_j-\E[G_iG_j]\bigr).
\]
The deterministic input is Schatten control of the four Hilbert-space flattenings; the soft-incidence kernel and Volterra integration provide the continuous-frequency model and its paracontrolled realization.

The four flattenings are the leaves of the two-step row/column Khintchine tree:
\[
 (a,b,c)\to e,\qquad
 (a,b,e)\to c,\qquad
 (a,c)\to(b,e),\qquad
 (b,c)\to(a,e).
\]
After the continuous frequency variables are inserted, these become
\[
 (\ell,\eta,q)\to n,\qquad
 (\ell,\eta,n)\to q,\qquad
 (\ell,q)\to(\eta,n),\qquad
 (\eta,q)\to(\ell,n).
\]
The first two directions retain one high-frequency degree and one external shell; the mixed directions see only the input/output phase-space volumes.  After the admissible domination $Q,M\lesssim N$, this gives the final $M^{d/2}+Q^{d/2}$ profile rather than a product profile.

Only the completely centered branch is treated.  The final application concerns the near-output sector $M\le C_{\rm out}N$ of
\[
 [I_\lambda(w\prec_{\rm loc}\Psi_1)\circ_{\rm loc}\Psi_2]^\circ,
\]
where $^\circ$ denotes complete Wick centering.  Deterministic contractions and localization tails are left to separate estimates.  The pathwise step uses only dyadic summability and the first Borel--Cantelli lemma.

\subsection{Main theorem}
\label{subsec:intro-main}

The precise theorem is stated in Section~\ref{sec:global}.  The following condensed form records the main conclusion.

\begin{theorem}[Main theorem, condensed form]
\label{thm:intro-main}
Assume the localized continuous Gaussian model described in Section~\ref{sec:setup}, together with the time-regularity and cutoff convergence hypotheses used in Section~\ref{sec:global}.  Suppose that the centered dyadic amplitude obeys the power-law envelope
\[
 \mathcal G(N)\le C_GN^{-\Gamma},\qquad \Gamma>\frac d2.
\]
Let $\lambda>0$ and choose exponents $s,\sigma$ in the strict window
\[
 s<\lambda+\Gamma-d,\qquad
 \max\{0,d-\Gamma\}<\sigma<\lambda+\Gamma-d.
\]
Then the localized centered Galerkin operators are uniformly bounded in every finite $L^p(\Omega)$ as maps
\[
 \mathbb X_T^\sigma
 \longrightarrow
 C_TH^{s-\lambda}\cap L_T^1B^{\sigma-\lambda}_{2,\infty},
\]
where $\mathbb X_T^\sigma$ is the dyadic model of $C_TL^2\cap L_T^\infty B^\sigma_{2,\infty}$ defined in~\eqref{eq:Xspace}.  They converge in $L^p(\Omega)$ in the corresponding operator norm.  Under the finite-state Galerkin stabilization hypothesis, the full cutoff sequence also converges pathwise on one probability-one event.  The same operator estimate applies to same-family Wick squares after complete Wick centering and order-two decoupling.
\end{theorem}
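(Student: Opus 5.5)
The plan is to reduce the condensed statement to a single dyadic block estimate and then sum. Fix dyadic scales $N$ (the two Gaussian legs), $Q$ (input) and $M$ (output), and let $Z_{N,Q,M}$ be the localized block of the completely Wick-centered operator. In the same-family case, first apply order-two decoupling to replace $\sum_{i,j}H^{\rm sym}_{ij}(G_iG_j-\E[G_iG_j])$ by an independent-copy chaos $\sum_{i,j}g_ih_jH_{ij}$; this costs only an absolute constant in every $L^p(\Omega)$. Then pull back through the covariance synthesis maps, so that the block becomes a finite-rank operator-valued centered second chaos $Z_{\mathbf H}:\cC\to\cE$ with coefficient kernel built from the soft incidence $k(n-q-\ell-\eta)$ times the dyadic amplitude.

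For this abstract object, apply the rectangular non-commutative Khintchine inequality twice, following the row/column tree: once in $g$, then once in $h$. This bounds $\E\norm{Z_{\mathbf H}}^p_{\Sch_r}$, for $r$ large and at the cost of $p$-dependent constants, by the maximum of the four flattening norms $(\ell,\eta,q)\to n$, $(\ell,\eta,n)\to q$, $(\ell,q)\to(\eta,n)$, $(\eta,q)\to(\ell,n)$.

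Next, bound each flattening by continuous Schur estimates combined with Hilbert--Schmidt interpolation for the soft-incidence kernel. I expect this to give
\[
\bigl(\E\norm{Z_{N,Q,M}}^p\bigr)^{1/p}\lesssim_p \mathcal G(N)\,\bigl(M^{d/2}+Q^{d/2}\bigr)\,N^{d/2}
\]
up to Schatten effective-dimension factors, after using the admissible domination $Q,M\lesssim N$. The time lift adds the Volterra gain: smoothing of order $\lambda$ at output scale $M$ gives a factor $M^{-\lambda}$, or the corresponding $N$-dependent factor in the near-output sector $M\le C_{\rm out}N$. The time-regularity hypothesis controls the $C_T$ modulus of continuity through a Kolmogorov-type argument.

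Then sum dyadically. The input is measured in $B^\sigma_{2,\infty}$, which costs $Q^{-\sigma}$. The output is measured in $H^{s-\lambda}$ and $B^{\sigma-\lambda}_{2,\infty}$, which costs $M^{s-\lambda}$ and $M^{\sigma-\lambda}$ respectively. Inserting $\mathcal G(N)\le C_GN^{-\Gamma}$ and summing the geometric series over $Q,M\lesssim N$ and then over $N$ should give convergence exactly when
\[
s<\lambda+\Gamma-d,\qquad d-\Gamma<\sigma<\lambda+\Gamma-d,\qquad \sigma>0.
\]
The role of $\sigma>0$ is to make the $Q$-sum converge against the $C_TL^2$ component of $\mathbb X^\sigma_T$.

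Uniform $L^p$ bounds for the Galerkin operators follow because the cutoffs act as contractions on every flattening. $L^p$ convergence follows by applying the same bound to differences: these are supported on blocks with $N\ge N_0$, so they are bounded by the tail of a convergent dyadic series. For pathwise convergence, use Chebyshev's inequality with large $p$ to make the dyadic tail probabilities summable. The first Borel--Cantelli lemma then gives convergence along dyadic cutoffs on one probability-one event, and the Galerkin stabilization hypothesis extends this from dyadic cutoffs to the full cutoff sequence.

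The main obstacle is the flattening estimate for the soft incidence. Unlike lattice counting, the kernel $k(n-q-\ell-\eta)$ is not a characteristic function, so one must check that the two mixed flattenings yield only the volumes $M^{d/2}$ or $Q^{d/2}$ rather than a product. My first attempt would be a Schur test in the variables that are integrated out, using the rapid decay of $k$ to absorb one $d$-dimensional integration, combined with Hilbert--Schmidt interpolation to pass to $\Sch_r$.
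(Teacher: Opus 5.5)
The bound half of your proposal follows the paper's route. You use decoupling, covariance pullback (Lemma~\ref{lem:bessel}), and the two-step row/column Khintchine argument that produces the four flattenings. You then use Schur plus Hilbert--Schmidt interpolation, absorbing the $N^{d/r},Q^{d/r},M^{d/r}$ losses by taking $r$ large (the paper takes $r\simeq p_0+\log(2+NQM)$). Chaining in $(t,s)$ and a dyadic sum then give exactly the window~\eqref{eq:lambda-window}. Schur and Hilbert--Schmidt bounds only see $|H|$, so you may bound the amplitude pointwise by $\norm{m}_{L^\infty}$; the Fourier expansion of Lemma~\ref{lem:fourier-exp} is not needed for these bounds.

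Two statements should be corrected.

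First, there is no Volterra gain. The integral $\int_0^t$ costs only a factor $T$ by Minkowski. All smoothing already sits in $\Kdet$, hence in $\Gamma$, and $\lambda$ is only the relabeling $\beta=s-\lambda$, $\zeta=\sigma-\lambda$ of the conditions $\beta,\zeta<\Gamma-d$; the statement allows any $\lambda>0$ with no hypothesis tying it to the operator. An extra $M^{-\lambda}$ would give $s<2\lambda+\Gamma-d$. Your window comes out right only because your summation does not actually use that factor.

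Second, the mixed flattenings are not of size $M^{d/2}$ or $Q^{d/2}$. The Schur bound gives the product $Q^{d/2}M^{d/2}$ (Proposition~\ref{prop:op-flat}), and this is harmless. What must be checked is that they carry no $N^{d/2}$; after that, admissibility gives $Q^{d/2}M^{d/2}\lesssim N^{d/2}(Q^{d/2}+M^{d/2})$.

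The genuine gap is in the convergence statements. Your $L^p$ argument assumes that $\cB^\circ_\Lambda-\cB^\circ$ involves only blocks with $N\ge N_0(\Lambda)\to\infty$. Your pathwise argument needs, in addition, a rate for this, so that Chebyshev gives summable probabilities. The hypotheses provide neither.

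For the $L^p$ claim, only per-triple $C^\theta$ convergence of $m_{\Lambda,\dots}$ and $S_{j,\Lambda,N,\cdot}$ is assumed, so low blocks may still move with $\Lambda$. The fix is the paper's finite-set/tail argument. Per-block convergence~\eqref{eq:fixed-block-conv} comes from running the time-lifted block estimate on the difference family (the three-term telescope plus Lemma~\ref{lem:bessel}), and the uniform majorant controls the tail.

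For the pathwise claim, the finite-state condition only says that each triple takes at most $L_{\rm cut}$ states and is eventually equal to its limit, with no quantitative threshold. Hence $\norm{\norm{\cB^\circ_\Lambda-\cB^\circ}}_{L^p}$ may decay arbitrarily slowly, and Chebyshev over the cutoff index yields no summable series. Note also that the cutoffs are already dyadic, $\Lambda\in\Dyd\cup\{\infty\}$. ``Full sequence'' means all $\Lambda$ rather than an almost surely convergent subsequence of an $L^p$-convergent sequence. So stabilization is not needed to pass from dyadic to general cutoffs; that is not the role it plays.

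The missing idea is to use the finite-state condition as a maximal inequality, $\norm{\sup_\Lambda\norm{B^\circ_{\Lambda,N,Q,M}}}_{L^p}\le L_{\rm cut}^{1/p}\max_a\norm{Y_a}_{L^p}$. You then run Borel--Cantelli over the countable family of triples $(N,Q,M)$ with loss $N^\rho Q^\rho M^\rho$ and $p\rho>1$. This gives one event on which every block is bounded by $C_*(\omega)a^{(\eps)}_{N,Q,M}$ uniformly in $\Lambda$. Convergence is then deterministic: finitely many triples have stabilized, and the remaining tail is uniformly small.

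If you add a quantitative stabilization hypothesis (all Galerkin data stable once $\Lambda\gtrsim N$), your route does work, and it even avoids $L_{\rm cut}$, because the tail majorant then decays polynomially in $\Lambda$. That is, however, a stronger assumption than the statement makes.
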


The $L^p$ assertions use only Assumption~\ref{ass:time}; the finite-state Galerkin hypothesis is used only for full-sequence pathwise convergence.  All regularity inequalities are strict.

\subsection{Organization of the proof}
\label{subsec:organization}

The rest of this section fixes the localized kernel, amplitude class, covariance synthesis maps, and centered blocks.  Section~\ref{sec:flattening} proves the four-flattening chaos estimate and soft-incidence Schatten bounds.  Section~\ref{sec:global} gives the time lift, dyadic assembly, Borel--Cantelli upgrade, and power-law theorem.  The last subsection rewrites the result in localized Bony notation for the near-output paracontrolled branch.

\subsection{The localized model and centered blocks}
\label{sec:setup}

\subsubsection{Dyadic geometry and physical localization}

Let
\[
 \Dyd:=\{2^j:j\in\mathbb N_0\}
\]
with the usual inhomogeneous convention at scale one.  Fix constants
\[
 0<c_{\rm ap}<1,\qquad C_{\rm out}\ge1,
\]
and define the admissible dyadic set
\begin{equation}\label{eq:dyadic-set}
 \Triads:=\{(N,Q,M)\in\Dyd^3:Q\le c_{\rm ap}N,\ M\le C_{\rm out}N\}.
\end{equation}
Finite low-frequency exceptions may be added to $\Triads$ without changing any statement below, since they only modify the constants.  Thus the inhomogeneous scale one causes no separate endpoint issue.

Choose fixed real-valued smooth cutoffs
\[
 \rho,\chi_{\rm in},\chi_{\rm out}\in C_c^\infty(\R^d),
 \qquad
 \norm{\rho}_{\infty}+\norm{\chi_{\rm in}}_{\infty}
 +\norm{\chi_{\rm out}}_{\infty}\le3
\]
with
\[
 \supp\rho\subset\{\xi:r_0\le |\xi|\le R_0\}
\]
for fixed $0<r_0<R_0<\infty$.  Put
\[
 \rho_N(\xi)=\rho(\xi/N),\qquad
 \chi_Q(q)=\chi_{\rm in}(q/Q),\qquad
 \chi_M(n)=\chi_{\rm out}(n/M).
\]
Thus
\[
 \norm{\rho_N}_{L^2}\lesssim N^{d/2},\qquad
 \norm{\chi_Q}_{L^2}\lesssim Q^{d/2},\qquad
 \norm{\chi_M}_{L^2}\lesssim M^{d/2}.
\]
Throughout the deterministic incidence estimates, expressions such as $\rho_N^2$ and $\chi_Q^2$ may equivalently be read as absolute squares.  All estimates are invariant under replacing the cutoffs by their absolute values inside the Schur and Hilbert--Schmidt majorants.

Let $k\in L^1(\R^d)\cap L^2(\R^d)$.  In applications $k$ is typically the Fourier transform of a compactly supported physical localizer, hence is Schwartz and in particular belongs to $L^1\cap L^2$.  The factor
\[
 k(n-q-\ell-\eta)
\]
is the soft form of the incidence $n=q+\ell+\eta$.

All frequency variables are integrated with respect to Lebesgue measure.  The only properties of the incidence kernel used below are the $L^1$ Schur control and the $L^2$ Hilbert--Schmidt control.  The changes of variables
\[
 z=n-q-\ell-\eta,
 \qquad \eta=n-q-\ell-z,
 \qquad \ell=n-q-\eta-z,
\]
and their variants are affine maps with determinant $\pm1$.  Thus the continuous estimates below contain no hidden Jacobian loss relative to the soft-incidence profile.  We work with real Hilbert spaces to keep the notation aligned with real isonormal processes; the complex Fourier-space case follows by applying the estimates to the underlying real Hilbert spaces, with at most universal changes in constants.

\subsubsection{Normalized order-zero amplitudes}

Fix an even integer $K>4d$.  For a smooth amplitude
$m=m_{N,Q,M}(\ell,\eta,q,n)$ on a fixed enlargement of the dyadic support, set
\begin{equation}\label{eq:symbol-norm}
 \norm{m}_{\Mclass^K_{N,Q,M}}
 :=\max_{|\alpha|+|\beta|+|\gamma|+|\delta|\le K}
 N^{|\alpha|+|\beta|}Q^{|\gamma|}M^{|\delta|}
 \norm{\partial_\ell^\alpha\partial_\eta^\beta
 \partial_q^\gamma\partial_n^\delta m}_{L^\infty}.
\end{equation}
This is a uniform $C^K$ norm after the normalization
\[
 (u,v,x,y)=(\ell/N,\eta/N,q/Q,n/M).
\]

For each cutoff $\Lambda\in\Dyd\cup\{\infty\}$, let
$c_\Lambda^{(1)},c_\Lambda^{(2)}$ be smooth Galerkin multipliers satisfying
\[
 |c_\Lambda^{(j)}|\le1,
 \qquad
 c_\Lambda^{(j)}(\xi)=1\text{ for }|\xi|\le\Lambda,
 \qquad
 c_\Lambda^{(j)}(\xi)=0\text{ for }|\xi|\ge2\Lambda,
\]
and $c_\infty^{(j)}\equiv1$.  Define
\[
 \rho_{\Lambda,N}^{(j)}=\rho_N c_\Lambda^{(j)}.
\]
For the pathwise full-sequence results we use the standard rescaled dyadic Galerkin family: there are fixed smooth profiles $c^{(j)}$ such that $c_\Lambda^{(j)}(\xi)=c^{(j)}(\xi/\Lambda)$ for finite $\Lambda$.  In that fixed-profile situation, for each fixed $N$, the family $\{\rho_{\Lambda,N}^{(j)}\}_{\Lambda\in\Dyd\cup\{\infty\}}$ has only finitely many distinct restrictions to the shell $|\xi|\sim N$ and is eventually equal to $\rho_N$.  The $L^p$ convergence statements below only need convergence of the multipliers and synthesis maps; the finite-state property is used only for the pathwise full-cutoff upgrade.

Let $\Kdet(N)>0$ be a deterministic multiplier envelope.  For
$u=(t,s)\in\Delta_T:=\{0\le s\le t\le T\}$, define the frequency four-linear kernel
\begin{equation}\label{eq:base-kernel}
 \begin{aligned}
 H^0_{\Lambda,N,Q,M,u}(\ell,\eta,q,n)
 :={}&\Kdet(N)\rho_{\Lambda,N}^{(1)}(\ell)
 \rho_{\Lambda,N}^{(2)}(\eta)\chi_Q(q)\chi_M(n)\\
 &\times m_{\Lambda,N,Q,M,u}(\ell,\eta,q,n)
 k(n-q-\ell-\eta).
 \end{aligned}
\end{equation}
We impose the normalized size bound
\begin{equation}\label{eq:m-size}
 \sup_{\Lambda,N,Q,M,u}
 \norm{m_{\Lambda,N,Q,M,u}}_{\Mclass^K_{N,Q,M}}
 \le C_m.
\end{equation}

\subsubsection{Covariance synthesis maps and Wick centering}

Let $\cH_1,\cH_2$ be real separable Hilbert spaces with independent isonormal Gaussian processes
\[
 W_j:\cH_j\to L^2(\Omega),\qquad j=1,2.
\]
We use the standard Gaussian Hilbert space and multiple Wiener integral conventions; see, for example, \cite{Janson,Nualart}.
For each $N$ and time, let
\[
 S_{1,\Lambda,N,s}:L^2_\ell\to\cH_1,
 \qquad
 S_{2,\Lambda,N,t}:L^2_\eta\to\cH_2
\]
be bounded covariance synthesis maps.  Assume envelopes $\mathcal A_1(N),\mathcal A_2(N)>0$ such that
\begin{equation}\label{eq:S-size}
 \sup_{\Lambda,s}\norm{S_{1,\Lambda,N,s}}_{\rm op}
 \le C_S\mathcal A_1(N),
 \qquad
 \sup_{\Lambda,t}\norm{S_{2,\Lambda,N,t}}_{\rm op}
 \le C_S\mathcal A_2(N).
\end{equation}
The total centered amplitude is
\begin{equation}\label{eq:G-envelope}
 \mathcal G(N):=\Kdet(N)\mathcal A_1(N)\mathcal A_2(N).
\end{equation}

Let $\mathbf H^0_{\Lambda,N,Q,M,u}$ be the four-linear form associated with the kernel~\eqref{eq:base-kernel}.  Its covariance pullback is
\begin{equation}\label{eq:pullback}
 \wt{\mathbf H}_{\Lambda,N,Q,M,u}(a,b,f,g)
 :=\mathbf H^0_{\Lambda,N,Q,M,u}
 (S_{1,\Lambda,N,s}^*a,S_{2,\Lambda,N,t}^*b,f,g).
\end{equation}
For independent Gaussian legs, the centered block is defined by
\begin{equation}\label{eq:dec-block}
 \ip{B^{\circ}_{\Lambda,N,Q,M}(u)f}{g}
 :=I_{1,1}\bigl(\wt{\mathbf H}_{\Lambda,N,Q,M,u;f,g}\bigr),
\end{equation}
where $I_{1,1}$ is the decoupled first-by-first Wiener integral on $\cH_1\otimes_2\cH_2$.

If the two stochastic legs come from the same isonormal process $W$ on a covariance Hilbert space $\cH$, we instead define
\begin{equation}\label{eq:wick-block}
 \ip{B^{\circ}_{\Lambda,N,Q,M}(u)f}{g}
 :=I_2\bigl(\Sym\wt{\mathbf H}_{\Lambda,N,Q,M,u;f,g}\bigr).
\end{equation}
Here $\Sym$ denotes orthogonal symmetrization in the two covariance legs.  We fix the following finite-coordinate convention.  If $(e_i)$ is an orthonormal basis of the covariance Hilbert space, $G_i=W(e_i)$, and $a=(a_{ij})$ is a symmetric finite matrix, then
\[
 I_2\left(\sum_{i,j}a_{ij}e_i\otimes e_j\right)
 =\sum_{i,j}a_{ij}\bigl(G_iG_j-\E[G_iG_j]\bigr)
 =\sum_{i,j}a_{ij}(G_iG_j-\delta_{ij}).
\]
Equivalently, for a non-symmetric finite matrix one first replaces $a$ by $a^{\rm sym}_{ij}=(a_{ij}+a_{ji})/2$ and then applies the preceding formula.  Other common Wiener--It\^o normalizations differ only by fixed $2!$-type constants; throughout this paper those constants are absorbed into the universal constants in the estimates.

Thus, if $H^{\rm sym}_{ij}(f,g)$ are the coordinates of $\Sym\wt{\mathbf H}_{\Lambda,N,Q,M,u;f,g}$, then the same-family block is
\[
 \sum_{i,j}H^{\rm sym}_{ij}(f,g)\,\bigl(G_iG_j-\E[G_iG_j]\bigr).
\]
The subtraction is the full scalar covariance pairing in the covariance Hilbert space, not merely a diagonal subtraction in the original frequency variables.  Non-diagonal covariances in physical Fourier coordinates are first pulled back to an orthonormal basis of the covariance Hilbert space; the covariance matrix is encoded in the synthesis maps $S_{j,\Lambda,N,t}$.  No orthogonality of smooth frequency projectors is imposed.  Complete Wick centering is performed before the operator estimate is applied, and the same-family estimate follows from the Banach-valued order-two decoupling estimate in Lemma~\ref{lem:wick-decoupling}.  Since symmetrization only exchanges the two high covariance legs, the deterministic soft-incidence estimates below control the symmetrized same-family kernel by the same profiles.

\section{Four flattenings and continuous incidence bounds}
\label{sec:flattening}

\subsection{The abstract four-flattening inequality}

Let $\cA,\cB,\cC,\cE$ be real separable Hilbert spaces.  For a finite-rank four-linear form
$\mathbf H:\cA\times\cB\times\cC\times\cE\to\R$, define the four oriented flattenings by Riesz representation:
\begin{align}
 \ip{F_1(a\otimes b\otimes c)}{e}_{\cE}
 &=\mathbf H(a,b,c,e),
 &F_1&:\cA\otimes_2\cB\otimes_2\cC\to\cE,
 \label{eq:F1-def}\\
 \ip{F_2(a\otimes b\otimes e)}{c}_{\cC}
 &=\mathbf H(a,b,c,e),
 &F_2&:\cA\otimes_2\cB\otimes_2\cE\to\cC,
 \label{eq:F2-def}\\
 \ip{F_3(a\otimes c)}{b\otimes e}_{\cB\otimes_2\cE}
 &=\mathbf H(a,b,c,e),
 &F_3&:\cA\otimes_2\cC\to\cB\otimes_2\cE,
 \label{eq:F3-def}\\
 \ip{F_4(b\otimes c)}{a\otimes e}_{\cA\otimes_2\cE}
 &=\mathbf H(a,b,c,e),
 &F_4&:\cB\otimes_2\cC\to\cA\otimes_2\cE.
 \label{eq:F4-def}
\end{align}

We use the following standard Schatten ideal facts; see, for example, \cite{Simon,Pisier}.

\begin{lemma}[Schatten ideal, interpolation, and tensor regrouping]
\label{lem:operator-ideal}
Let $U,V,U',V'$ be Hilbert spaces.
\begin{enumerate}[label=\textup{(\roman*)},leftmargin=2.2em]
 \item If $T\in\cL(U,V)\cap\Sch_2(U,V)$ and $2\le r<\infty$, then
 \begin{equation}\label{eq:Schatten-interp}
  \norm{T}_{\Sch_r}
  \le \norm{T}_{\rm op}^{1-2/r}\norm{T}_{\Sch_2}^{2/r}.
 \end{equation}
 \item If $A\in\cL(V,V')$, $B\in\cL(U',U)$, and $T\in\Sch_r(U,V)$, then
 \begin{equation}\label{eq:ideal-two-sided}
  \norm{ATB}_{\Sch_r(U',V')}
  \le \norm{A}_{\rm op}\norm{T}_{\Sch_r(U,V)}\norm{B}_{\rm op}.
 \end{equation}
 \item The canonical regroupings of Hilbert tensor products, for instance
 $((\cA\otimes_2\cB)\otimes_2\cC)\simeq\cA\otimes_2\cB\otimes_2\cC$, are unitary.  Consequently Hilbert--Schmidt norms are unchanged by a mere reorientation of tensor legs.  Multiplication by unimodular factors on any frequency leg is also unitary and hence preserves every Schatten norm.
\end{enumerate}
\end{lemma}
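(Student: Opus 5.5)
The plan is to prove the three items separately, each from the singular value decomposition of compact operators and the polar decomposition.

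For (i), note first that $T\in\Sch_2(U,V)$ is compact. It therefore has a singular value sequence $(s_k)$ with $s_k\le s_1=\norm{T}_{\rm op}$ and $\sum_k s_k^2=\norm{T}_{\Sch_2}^2$. For $2\le r<\infty$ we write $s_k^r=s_k^{r-2}s_k^2\le \norm{T}_{\rm op}^{r-2}s_k^2$ and sum over $k$, which gives
\[
\norm{T}_{\Sch_r}^r\le\norm{T}_{\rm op}^{r-2}\norm{T}_{\Sch_2}^2 .
\]
Taking $r$-th roots yields \eqref{eq:Schatten-interp}. No complex interpolation is needed; this elementary bound is the whole argument.

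For (ii), we use the min-max characterization of singular values,
\[
s_k(T)=\inf\{\norm{T-F}_{\rm op}:\operatorname{rank}F<k\}.
\]
This gives $s_k(ATB)\le\norm{A}_{\rm op}\,s_k(T)\,\norm{B}_{\rm op}$, since $AFB$ still has rank less than $k$ and $\norm{A(T-F)B}\le\norm{A}\norm{T-F}\norm{B}$. Summing the $r$-th powers (or taking the supremum when $r=\infty$) gives \eqref{eq:ideal-two-sided}, and in particular shows that $ATB\in\Sch_r(U',V')$.

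For (iii), the regrouping map sends the orthonormal basis $\{(a_i\otimes b_j)\otimes c_k\}$ to the orthonormal basis $\{a_i\otimes b_j\otimes c_k\}$. It extends linearly and isometrically with dense range, so it is unitary. Now let $\mathbf H$ be a finite-rank form and let $F$ be any flattening of it. Its Hilbert--Schmidt norm squared is $\sum|\mathbf H(a_i,b_j,c_k,e_l)|^2$ over product bases, which does not depend on how the legs are grouped into input and output. Hence all of $F_1,\dots,F_4$ share the same $\Sch_2$ norm. Multiplication by a unimodular function on $L^2$ of a frequency leg is an isometric bijection, hence unitary; tensored with identities on the other legs it remains unitary. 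Applying (ii) with $A$ and $B$ unitary, and also to their inverses, shows that every Schatten norm is preserved exactly.

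The main obstacle is minor: justifying the min-max formula in (ii) for non-compact $T$ when $r=\infty$. In that case (ii) is simply submultiplicativity of the operator norm, so I would treat $r=\infty$ separately and use the singular-value argument only for finite $r$, where $T$ is compact. All of this is standard and can be cited from \cite{Simon,Pisier}.
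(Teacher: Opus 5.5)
Your proof is correct and follows the paper's argument. Item (i) is the identical estimate $s_k^r\le\norm{T}_{\rm op}^{r-2}s_k^2$. In (ii) and (iii) you supply the standard details that the paper only cites: your approximation-number bound $s_k(ATB)\le\norm{A}_{\rm op}\,s_k(T)\,\norm{B}_{\rm op}$ treats compact $T$ directly, with no finite-rank-then-completion step, and your basis-to-basis and coefficient-sum computations replace the appeal to the universal property and Fubini.

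One small adjustment: in (iii) you need not restrict to finite-rank forms. The identity $\norm{F}_{\Sch_2}^2=\sum_{i,j,k,l}|\mathbf H(a_i,b_j,c_k,e_l)|^2$ over product orthonormal bases holds verbatim for every bounded flattening, and Lemma~\ref{lem:HS} needs this generality for the non-finite-rank kernel~\eqref{eq:model-kernel}.
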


\begin{proof}
For (i), let $(s_j(T))$ be the singular values.  Since
$\sum_j s_j(T)^r\le \norm{T}_{\rm op}^{r-2}\sum_js_j(T)^2$, taking the $r$-th root gives~\eqref{eq:Schatten-interp}.  Statement (ii) is the standard two-sided ideal property of Schatten classes, first for finite-rank operators and then by completion.  Statement (iii) follows from the defining universal property of Hilbert tensor products and from Fubini's theorem for the concrete $L^2$ models used in the frequency variables.
\end{proof}

\begin{lemma}[Finite compression of Schatten flattenings]
\label{lem:finite-compression}
Let $2\le r<\infty$ and let $F_1,F_2,F_3,F_4$ be the four flattenings associated with a four-linear form $\mathbf H$, with $F_j\in\Sch_r$.  Let $P_{\cA}^{(m)},P_{\cB}^{(m)},P_{\cC}^{(m)},P_{\cE}^{(m)}$ be finite-rank orthogonal projections increasing strongly to the identities.  Define the compressed form
\[
 \mathbf H_m(a,b,c,e)
 :=\mathbf H(P_{\cA}^{(m)}a,P_{\cB}^{(m)}b,
 P_{\cC}^{(m)}c,P_{\cE}^{(m)}e).
\]
If $F_{j,m}$ are the four flattenings of $\mathbf H_m$, then
\[
 \norm{F_{j,m}-F_j}_{\Sch_r}\longrightarrow0,
 \qquad j=1,2,3,4.
\]
\end{lemma}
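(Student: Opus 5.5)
The plan is to express each compressed flattening as a two-sided compression of the original flattening by tensor products of the finite projections, and then to invoke the standard fact that Schatten-class operators ($r<\infty$) are continuous under strong convergence of uniformly bounded multipliers on both sides.

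\textbf{Step 1: identify the compressed flattenings.} For $F_1$, the defining identity \eqref{eq:F1-def} applied to $\mathbf H_m$ gives
\[
 F_{1,m}=P_{\cE}^{(m)}F_1\bigl(P_{\cA}^{(m)}\otimes P_{\cB}^{(m)}\otimes P_{\cC}^{(m)}\bigr).
\]
This holds on elementary tensors, since $P^{(m)}$ is self-adjoint. It then extends by linearity and boundedness, because tensor products of orthogonal projections are orthogonal projections on the Hilbert tensor product (Lemma~\ref{lem:operator-ideal}(iii)). In the same way one obtains
\[
 F_{2,m}=P_{\cC}F_2(P_{\cA}\otimes P_{\cB}\otimes P_{\cE}),\quad
 F_{3,m}=(P_{\cB}\otimes P_{\cE})F_3(P_{\cA}\otimes P_{\cC}),\quad
 F_{4,m}=(P_{\cA}\otimes P_{\cE})F_4(P_{\cB}\otimes P_{\cC}).
\]
Hence each case has the form $F_{j,m}=X_mF_jY_m$, where $X_m,Y_m$ are orthogonal projections.

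\textbf{Step 2: strong convergence of the tensor projections.} We show $X_m\to \Id$ and $Y_m\to\Id$ strongly. On elementary tensors this follows from the telescoping estimate
\[
 \norm{P_{\cA}a\otimes P_{\cB}b-a\otimes b}\le\norm{(P_{\cA}-\Id)a}\norm{b}+\norm{a}\norm{(P_{\cB}-\Id)b},
\]
and similarly for three factors. It passes to finite linear combinations, and then to the whole space by density together with the uniform bound $\norm{X_m}_{\rm op}\le1$ (an $\varepsilon/3$ argument).

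\textbf{Step 3: the Schatten continuity argument, which is the main point.} Write
\[
 X_mF_jY_m-F_j=X_mF_j(Y_m-\Id)+(X_m-\Id)F_j .
\]
By \eqref{eq:ideal-two-sided} it suffices to prove $\norm{(X_m-\Id)T}_{\Sch_r}\to0$ and $\norm{T(Y_m-\Id)}_{\Sch_r}\to0$ for $T\in\Sch_r$. The second reduces to the first by taking adjoints, since $\norm{T^*}_{\Sch_r}=\norm{T}_{\Sch_r}$ and $Y_m$ is self-adjoint.

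For the first, fix $\varepsilon>0$. Because $r<\infty$, finite-rank operators are dense in $\Sch_r$, so we may choose a finite-rank $T_\varepsilon=\sum_{k\le K}\ip{\cdot}{u_k}v_k$ with $\norm{T-T_\varepsilon}_{\Sch_r}<\varepsilon$. Then
\[
 \norm{(X_m-\Id)T}_{\Sch_r}\le 2\varepsilon+\norm{(X_m-\Id)T_\varepsilon}_{\Sch_r}.
\]
For the last term,
\[
 \norm{(X_m-\Id)T_\varepsilon}_{\Sch_r}\le\norm{(X_m-\Id)T_\varepsilon}_{\Sch_1}\le\sum_{k\le K}\norm{u_k}\,\norm{(X_m-\Id)v_k},
\]
and the right-hand side tends to $0$ by Step 2. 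Hence $\limsup_m\norm{(X_m-\Id)T}_{\Sch_r}\le2\varepsilon$, and since $\varepsilon$ was arbitrary the limit is $0$. This gives the claim for each $j=1,2,3,4$.

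The hypothesis $r<\infty$ enters only through the density of finite-rank operators in $\Sch_r$; the statement fails for the operator norm in general.
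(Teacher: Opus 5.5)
Your proof is correct and follows the paper's argument. Both write each compressed flattening as a two-sided compression $X_mF_jY_m$ by tensor products of the projections, then use that such compressions converge to $F_j$ in $\Sch_r$ for $r<\infty$, first for finite-rank operators and then by density. You additionally spell out what the paper leaves implicit: the strong convergence of the tensor projections, the splitting $X_mF_j(Y_m-\Id)+(X_m-\Id)F_j$, and the reduction of the right factor to the left one by taking adjoints.
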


\begin{proof}
For example,
\[
 F_{1,m}=P_{\cE}^{(m)}F_1
 (P_{\cA}^{(m)}\otimes P_{\cB}^{(m)}\otimes P_{\cC}^{(m)}),
\]
and the other orientations have the analogous left and right compressions.  If $T\in\Sch_r(U,V)$ and $P_m,Q_m$ are finite-rank orthogonal projections converging strongly to the identities, then $P_mTQ_m\to T$ in $\Sch_r$.  This follows first for finite-rank $T$ and then by approximation in $\Sch_r$.  Applying this observation to each flattening proves the claim.
\end{proof}

\begin{lemma}[Banach-valued decoupling for Wick squares]
\label{lem:wick-decoupling}
Let $B$ be a separable Banach space.  Let $(x_{ij})_{1\le i,j\le m}\subset B$, let $(G_i)_{i=1}^m$ be a standard real Gaussian family, and let $(g_i)_{i=1}^m$ and $(h_j)_{j=1}^m$ be two independent standard Gaussian copies.  Then, for every $1\le p<\infty$,
\begin{equation}\label{eq:wick-decoupling}
 \left\|\sum_{i,j=1}^m x_{ij}\bigl(G_iG_j-\E[G_iG_j]\bigr)\right\|_{L^p(\Omega;B)}
 \le C_{\rm dec}
 \left\|\sum_{i,j=1}^m x_{ij}g_ih_j\right\|_{L^p(\Omega;B)},
\end{equation}
where $C_{\rm dec}$ depends only on the order-two decoupling theorem, and not on $m$, on the Banach space $B$, or on $p$.  In particular, after symmetrizing the same-family kernel, a completely Wick-centered square may be estimated through its decoupled version.
\end{lemma}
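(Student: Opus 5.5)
We argue by symmetrization followed by the classical order-two decoupling of de la Peña--Montgomery-Smith for Gaussian chaoses.

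\textbf{Step 1: symmetrize and split off the diagonal.} Since $\sum_{i,j}x_{ij}(G_iG_j-\delta_{ij})$ depends on $(x_{ij})$ only through $x^{\rm sym}_{ij}=(x_{ij}+x_{ji})/2$, we may assume that $x$ is symmetric. We then write
\[
 \sum_{i,j}x_{ij}(G_iG_j-\delta_{ij})=\sum_{i\ne j}x_{ij}G_iG_j+\sum_i x_{ii}(G_i^2-1).
\]

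\textbf{Step 2: the off-diagonal part.} The off-diagonal part is a tetrahedral homogeneous Gaussian chaos of order two with $B$-valued coefficients. For such chaoses the decoupling inequality of de la Peña and Montgomery-Smith holds with a universal constant, for every convex increasing $\Phi$ and in particular for $L^p$ norms:
\[
 \Bigl\|\sum_{i\ne j}x_{ij}G_iG_j\Bigr\|_{L^p(B)}\le C\Bigl\|\sum_{i\ne j}x_{ij}g_ih_j\Bigr\|_{L^p(B)}.
\]
For Gaussians, one can also obtain this directly. Take $(G,G')$ independent and set $g=(G+G')/\sqrt2$, $h=(G-G')/\sqrt2$; then $g$ and $h$ are independent standard Gaussian vectors. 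Expanding $\sum_{ij} x_{ij} g_i h_j$ and conditioning on $G$ by Jensen reduces the comparison to the quadratic form in $G$, up to a copy in $G'$ and cross terms handled by symmetry.

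\textbf{Step 3: the diagonal terms.} For the diagonal, we use the rotation identity with $u=(G+G')/\sqrt2$ and $v=(G-G')/\sqrt2$:
\[
 \sum_{i,j}x_{ij}(u_iu_j-v_iv_j)=\sum_{i,j}x_{ij}(G_iG'_j+G'_iG_j),
\]
which uses the symmetry of $x$. The left side equals $Y(u)-Y(v)$, where $Y(G)=\sum_{i,j}x_{ij}(G_iG_j-\delta_{ij})$, because the $\delta_{ij}$ terms cancel. Since $u$ and $v$ are independent copies of $G$ and $\E Y(v)=0$, conditioning on $u$ and applying Jensen give
\[
 \|Y\|_{L^p(B)}\le\|Y(u)-Y(v)\|_{L^p(B)}\le 2\Bigl\|\sum_{i,j} x_{ij}G_iG'_j\Bigr\|_{L^p(B)}.
\]
The last bound uses the triangle inequality together with the fact that $(G',G)$ has the same law as $(G,G')$, so $\sum x_{ij}G'_iG_j$ has the same law as $\sum x_{ji}G_iG'_j=\sum x_{ij}G_iG'_j$.

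This argument handles the full sum, diagonal included, so Step 2 is not needed. It yields \eqref{eq:wick-decoupling} with $C_{\rm dec}=2$, independent of $m$, $B$ and $p$. Separability of $B$ only ensures that the Bochner norms are well defined.

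\textbf{Main obstacle.} The delicate point is the Jensen step: conditional expectation is a contraction on $L^p(\Omega;B)$, and we need $\E[Y(u)-Y(v)\mid u]=Y(u)$, which holds because $Y(v)$ is centered. Since $Y$ is a polynomial in $v$, it is integrable and Bochner-measurable in finite dimensions, so this is justified.
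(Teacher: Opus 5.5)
Your proof is correct, but it takes a different route from the paper. The paper gives no self-contained argument: it invokes the general order-two Banach-valued decoupling theorem for Gaussian chaoses (de la Pe\~na--Gin\'e, Chapter 3), noting only that subtracting $\E[G_iG_j]$ removes the zeroth-chaos component.

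Your Step 3 instead proves the one-sided inequality directly. The rotation $u=(G+G')/\sqrt2$, $v=(G-G')/\sqrt2$ gives two independent copies of $G$ with
\[
 Y(u)-Y(v)=\sum_{i,j}x_{ij}(G_iG'_j+G'_iG_j).
\]
Conditional Jensen, using $\E Y(v)=0$, then yields $\|Y\|_{L^p(B)}\le 2\|\sum_{i,j}x_{ij}g_ih_j\|_{L^p(B)}$ for every $1\le p<\infty$.

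What your route buys:
\begin{itemize}
\item an explicit constant $C_{\rm dec}=2$;
\item a proof that uses only rotation invariance of Gaussian vectors;
\item simultaneous treatment of the diagonal Wick terms $x_{ii}(G_i^2-1)$ and the off-diagonal ones;
\item validity, verbatim, for a non-identity covariance $\E[G_iG_j]$.
\end{itemize}
On the diagonal point: the de la Pe\~na--Montgomery-Smith inequality you quote in Step 2 is for the tetrahedral sum over $i\ne j$, so relying on it alone would require a separate diagonal argument. The cited general theorem gives considerably more---two-sided comparison, arbitrary convex moduli and tail bounds, higher orders---but the paper uses none of that, only the one direction you prove.

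Two small remarks. First, Steps 1 and 2 are superfluous, as you yourself observe. Second, neither the rotation identity nor the final equality in law actually requires $x$ to be symmetric. The identity $u_iu_j-v_iv_j=G_iG'_j+G'_iG_j$ holds termwise. Also $\sum x_{ij}G'_iG_j\stackrel{d}{=}\sum x_{ij}G_iG'_j$ follows from $(G',G)\stackrel{d}{=}(G,G')$ alone.

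This matters because your reduction to symmetric $x$ in Step 1 changes the right-hand side, and you did not justify it. The reduction is harmless, since $\|\sum x^{\rm sym}_{ij}g_ih_j\|_{L^p(B)}\le\|\sum x_{ij}g_ih_j\|_{L^p(B)}$ by exchangeability of $(g,h)$ and the triangle inequality. But you can simply drop it and run Step 3 for a general matrix $x$.
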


\begin{proof}
This is the standard order-two Banach-valued decoupling inequality for Gaussian homogeneous chaoses; see, for instance, \cite[Chapter 3]{deLaPenaGine}.  The subtraction of $\E[G_iG_j]$ removes the zeroth-chaos component and leaves the second homogeneous Wiener-chaos projection.  The estimate is finite-dimensional here and is applied below with $B=\cK(\cC,\cE)$, or with the corresponding separable finite-shell operator space before passage to the limit.
\end{proof}

\begin{theorem}[Four-flattening second-chaos inequality]
\label{thm:four-abstract}
Let $2\le p\le r<\infty$, and suppose $F_1,F_2,F_3,F_4\in\Sch_r$.  Let $Z_{\mathbf H}:\cC\to\cE$ be the decoupled second-chaos operator associated with $\mathbf H$.  Then
\begin{equation}\label{eq:abstract-bound}
 \left\|\,\norm{Z_{\mathbf H}}_{\cC\to\cE}\,\right\|_{L^p(\Omega)}
 \le C r\sum_{j=1}^4\norm{F_j}_{\Sch_r},
\end{equation}
where $C$ is universal; the displayed linear factor $r$ is the only dependence on the chosen Schatten exponent in this estimate.  The map $\mathbf H\mapsto Z_{\mathbf H}$ extends continuously from finite-rank forms to all forms with $F_j\in\Sch_r$.  The same conclusion holds for the completely Wick-centered same-family chaos, up to the universal order-two decoupling constant.
\end{theorem}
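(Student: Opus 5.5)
The plan is a two-step application of the rectangular non-commutative Khintchine inequality, conditioning first on one Gaussian family and then on the other, for finite-rank $\mathbf H$. Once the finite-rank bound is in place, the extension to general forms and the same-family case are handled by Lemma~\ref{lem:finite-compression} and Lemma~\ref{lem:wick-decoupling}.

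Fix orthonormal bases $(\alpha_i)$ of $\cA$ and $(\beta_j)$ of $\cB$, with $g_i=W_1(\alpha_i)$ and $h_j=W_2(\beta_j)$. Then
\[
Z_{\mathbf H}=\sum_{i,j}g_ih_jH_{ij},\qquad
H_{ij}:\cC\to\cE,\qquad
\ip{H_{ij}c}{e}=\mathbf H(\alpha_i,\beta_j,c,e).
\]
We bound the operator norm by the $\Sch_r$ norm, $\norm{Z}_{\rm op}\le\norm{Z}_{\Sch_r}$. Since $p\le r$, it suffices to bound $\|\,\norm{Z}_{\Sch_r}\|_{L^r(\Omega)}$.

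\textbf{First step: condition on $h$.} Write $Z=\sum_i g_iX_i$ with $X_i=\sum_jh_jH_{ij}$. The Schatten-$r$ non-commutative Khintchine inequality (Lust-Piquard--Pisier, with the $\sqrt r$ constant) gives
\[
\E_g\norm{Z}_{\Sch_r}^r\le (C\sqrt r)^r\max\Bigl\{\bigl\|(\textstyle\sum_iX_i^*X_i)^{1/2}\bigr\|_{\Sch_r}^r,\ \bigl\|(\textstyle\sum_iX_iX_i^*)^{1/2}\bigr\|_{\Sch_r}^r\Bigr\}.
\]
Each square function is the $\Sch_r$ norm of a rectangular stacked operator. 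The column stack $\cC\to\cA\otimes_2\cE$, $c\mapsto\sum_i\alpha_i\otimes X_ic$, is linear in $h$, namely $\sum_jh_jY_j$. The row stack $\cA\otimes_2\cC\to\cE$ is likewise $\sum_jh_jY'_j$. Both are again Gaussian sums with operator coefficients.

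\textbf{Second step: Khintchine in $h$.} Apply non-commutative Khintchine again in $L^r(\Omega_h)$, picking up another $C\sqrt r$; this accounts for the total factor $Cr$. This produces four square functions, which again are stacks. I expect the stacked objects to be identified with the four flattenings as follows:
\begin{itemize}[leftmargin=2.2em]
\item the row-of-row stack $\cA\otimes\cB\otimes\cC\to\cE$, which is $F_1$;
\item the column-of-column stack $\cC\to\cA\otimes\cB\otimes\cE$, which is $F_2^*$ up to regrouping;
\item the two mixed stacks $\cA\otimes\cC\to\cB\otimes\cE$ and $\cB\otimes\cC\to\cA\otimes\cE$, which are $F_3$ and $F_4$.
\end{itemize}
The identifications use Lemma~\ref{lem:operator-ideal}(iii): regroupings are unitary and $\norm{T^*}_{\Sch_r}=\norm{T}_{\Sch_r}$. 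Bounding the max by the sum yields~\eqref{eq:abstract-bound}.

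\textbf{Extension and same-family case.} For general $\mathbf H$, apply the finite-rank bound to the differences $\mathbf H_m-\mathbf H_{m'}$ of compressions. By Lemma~\ref{lem:finite-compression}, $Z_{\mathbf H_m}$ is Cauchy in $L^p(\Omega;\cL(\cC,\cE))$. The limit is independent of the approximating sequence, since any two sequences can be interleaved, and it inherits the bound. For the same-family chaos, symmetrize, then apply Lemma~\ref{lem:wick-decoupling} with $B$ a finite-dimensional operator space and pass to the limit the same way. Symmetrization swaps $\cA$ and $\cB$, which exchanges $F_3$ and $F_4$ and maps $F_1,F_2$ to regroupings of themselves, so the sum of norms is unchanged up to a factor $2$.

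\textbf{Main obstacle.} The delicate step is the bookkeeping in the second step. One must verify that the row square function of a column stack is exactly the mixed flattening, rather than some other operator with a larger norm. One must also check that the $\Sch_r$ norm of $(\sum X^*X)^{1/2}$ coincides with that of the stacked operator, which is the identity $\norm{(T^*T)^{1/2}}_{\Sch_r}=\norm{T}_{\Sch_r}$. I would first check this coordinate-wise on matrix units before writing the basis-free identification.
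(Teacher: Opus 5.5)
Your proposal is correct and follows essentially the paper's argument: a two-step rectangular non-commutative Khintchine bound whose four square functions are identified with $F_1$, $F_2^*$, $F_3$, $F_4$, followed by finite compression (Lemma~\ref{lem:finite-compression}) for the extension and Lemma~\ref{lem:wick-decoupling} for the same-family case. The differences are cosmetic. You apply Khintchine in $g$ first, whereas the paper conditions on $g$ and applies it in $h$ first; both orders produce the same four leaves. You also make explicit that symmetrization only exchanges $F_3$ and $F_4$ and regroups $F_1,F_2$, a check the paper leaves implicit.
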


\begin{proof}
Choose finite orthonormal systems $(a_i)$ and $(b_j)$ supporting a finite-rank approximation and write
\[
 Z=\sum_{i,j}g_ih_jH_{ij},\qquad H_{ij}\in\cL(\cC,\cE),
\]
with independent standard real Gaussian families.  Condition on $(g_i)$ and set
\[
 S_j(g)=\sum_i g_iH_{ij}.
\]
For each fixed realization of $(g_i)$, the rectangular non-commutative Khintchine inequality in the $h$-variables, in the Schatten-valued form of Lust-Piquard--Pisier~\cite{LPP} and Pisier~\cite{Pisier}, gives
\[
 \norm{Z(g,\cdot)}_{L^r_h\Sch_r}
 \le C\sqrt r\bigl(\norm{\mathscr S(g)}_{\Sch_r}+\norm{\mathscr T(g)}_{\Sch_r}\bigr),
\]
Here this is only used in the finite-dimensional rectangular form
\[
 \left\|\sum_j h_jA_j\right\|_{L^r(\Omega;\Sch_r)}
 \le C\sqrt r\left(
 \left\|\left(\sum_jA_jA_j^*\right)^{1/2}\right\|_{\Sch_r}
 +\left\|\left(\sum_jA_j^*A_j\right)^{1/2}\right\|_{\Sch_r}
 \right),
\]
with rectangular operators $A_j$ between Hilbert spaces.
The block row and block column are
\[
 \mathscr S(g):\cB\otimes_2\cC\to\cE,
 \qquad
 \mathscr S(g)(b_j\otimes c)=S_j(g)c,
\]
and
\[
 \mathscr T(g):\cC\to\cB\otimes_2\cE,
 \qquad
 \mathscr T(g)c=\sum_jb_j\otimes S_j(g)c.
\]
Taking $L^r$ in the $g$-variables, we estimate these two random Schatten norms once more by non-commutative Khintchine.  Write $\mathscr S(g)=\sum_i g_iT_i$, with
\[
 T_i:\cB\otimes_2\cC\to\cE,
 \qquad T_i(b_j\otimes c)=H_{ij}c.
\]
The two square functions generated by this block row are
\[
 \left(\sum_iT_iT_i^*\right)^{1/2}
 \quad\text{and}\quad
 \left(\sum_iT_i^*T_i\right)^{1/2},
\]
and the flattening identities are
\[
 F_1F_1^*=\sum_iT_iT_i^*,\qquad
 F_4^*F_4=\sum_iT_i^*T_i.
\]
Similarly, write $\mathscr T(g)=\sum_i g_iU_i$, where
\[
 U_i:\cC\to\cB\otimes_2\cE,
 \qquad
 U_i c=\sum_jb_j\otimes H_{ij}c.
\]
The second pair of square functions is
\[
 \left(\sum_iU_iU_i^*\right)^{1/2}
 \quad\text{and}\quad
 \left(\sum_iU_i^*U_i\right)^{1/2},
\]
and the corresponding flattening identities are
\[
 F_3F_3^*=\sum_iU_iU_i^*,\qquad
 F_2F_2^*=\sum_iU_i^*U_i.
\]
Thus the two-step row/column conditioning produces exactly the four oriented flattenings.  Each Khintchine application costs $C\sqrt r$, hence
\[
 \norm{Z}_{L^r(\Omega;\Sch_r)}
 \lesssim r\sum_{j=1}^4\norm{F_j}_{\Sch_r}.
\]
Since $p\le r$ and $\norm{T}_{\rm op}\le\norm{T}_{\Sch_r}$, this proves~\eqref{eq:abstract-bound}.

For general Schatten flattenings, choose finite-rank compressions of the four Hilbert legs.  Lemma~\ref{lem:finite-compression} gives convergence of the compressed flattenings in $\Sch_r$, and the finite-rank estimates above therefore make the compressed random operators Cauchy in $L^p(\Omega;\cK(\cC,\cE))$.  Because $\cC$ and $\cE$ are separable Hilbert spaces, $\cK(\cC,\cE)$ is separable in the operator norm.  The limit is independent of the chosen compressions.  This defines the extension of $\mathbf H\mapsto Z_{\mathbf H}$ and preserves~\eqref{eq:abstract-bound}.  The same-family statement follows by applying Lemma~\ref{lem:wick-decoupling}, with values in the separable compact-operator space, before the decoupled estimate.
\end{proof}

\begin{lemma}[Covariance/Bessel transfer]
\label{lem:bessel}
Let $U_1:\wt\cA\to\cA$ and $U_2:\wt\cB\to\cB$ be bounded, and define
\[
 \wt{\mathbf H}(\wt a,\wt b,c,e)
 :=\mathbf H(U_1\wt a,U_2\wt b,c,e).
\]
Then, for $2\le r\le\infty$, with $\Sch_\infty$ understood as the operator norm class,
\begin{equation}\label{eq:bessel}
 \max_j\norm{\wt F_j}_{\Sch_r}
 \le \norm{U_1}_{\rm op}\norm{U_2}_{\rm op}
 \max_j\norm{F_j}_{\Sch_r}.
\end{equation}
\end{lemma}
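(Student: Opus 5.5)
The plan is to write each pulled-back flattening $\wt F_j$ as a two-sided composition of the original flattening $F_j$ with tensor products of $U_1$, $U_2$, their adjoints, and identities. The ideal property, Lemma~\ref{lem:operator-ideal}(ii), then does the rest; the case $r=\infty$ is the operator-norm version of the same inequality. One first checks that for operators $A:U\to V$ and $B:U'\to V'$ the Hilbert tensor product $A\otimes B$ is bounded with $\norm{A\otimes B}_{\rm op}=\norm{A}_{\rm op}\norm{B}_{\rm op}$. This is standard, and by the regrouping in Lemma~\ref{lem:operator-ideal}(iii) it extends to any number of tensor factors.

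For the four orientations, note that on elementary tensors $\wt{\mathbf H}(\wt a,\wt b,c,e)=\mathbf H(U_1\wt a,U_2\wt b,c,e)$. Testing against elementary tensors and extending by linearity and density gives
\[
 \wt F_1=F_1(U_1\otimes U_2\otimes\Id_\cC),\qquad
 \wt F_2=F_2(U_1\otimes U_2\otimes\Id_\cE),
\]
\[
 \wt F_3=(U_2^*\otimes\Id_\cE)\,F_3\,(U_1\otimes\Id_\cC),\qquad
 \wt F_4=(U_1^*\otimes\Id_\cE)\,F_4\,(U_2\otimes\Id_\cC).
\]
For $\wt F_3$, for example, the pairing is
\[
 \ip{\wt F_3(\wt a\otimes c)}{\wt b\otimes e}
 =\mathbf H(U_1\wt a,U_2\wt b,c,e)
 =\ip{F_3(U_1\wt a\otimes c)}{(U_2\otimes\Id)(\wt b\otimes e)},
\]
and moving $U_2\otimes\Id$ to the other side as its adjoint $U_2^*\otimes\Id$ gives the displayed formula.

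Applying \eqref{eq:ideal-two-sided} to each identity gives $\norm{\wt F_j}_{\Sch_r}\le\norm{U_1}_{\rm op}\norm{U_2}_{\rm op}\norm{F_j}_{\Sch_r}$ for each $j$ separately. The adjoint has the same norm, and the identity factors have norm one. This is stronger than \eqref{eq:bessel}; taking the maximum over $j$ gives the statement.

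The only delicate step is justifying the factorization identities when $\mathbf H$ is not finite rank, since the flattenings are defined by Riesz representation. I would handle this in one of two ways. One option is to prove the identities for finite-rank $\mathbf H$ and pass to the limit using the compressions of Lemma~\ref{lem:finite-compression}. The simpler option is to observe that both sides are bounded operators that agree as bilinear pairings on the algebraic span of elementary tensors. That span is dense, so the two sides coincide, and the membership $\wt F_j\in\Sch_r$ then follows from the ideal property.
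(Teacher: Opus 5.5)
Your proposal is correct and takes the same route as the paper. The paper's proof consists of exactly your four factorizations, $\wt F_{1}=F_{1}(U_1\otimes U_2\otimes\Id)$, $\wt F_{2}=F_{2}(U_1\otimes U_2\otimes\Id)$, $\wt F_3=(U_2^*\otimes\Id)F_3(U_1\otimes\Id)$ and $\wt F_4=(U_1^*\otimes\Id)F_4(U_2\otimes\Id)$, followed by the two-sided ideal property of Lemma~\ref{lem:operator-ideal}\,(ii); your remarks on $\norm{A\otimes B}_{\rm op}\le\norm{A}_{\rm op}\norm{B}_{\rm op}$ and on identifying the Riesz representatives by density only make explicit what the paper leaves implicit.
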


\begin{proof}
For $j=1,2$,
\[
 \wt F_j=F_j(U_1\otimes U_2\otimes\Id).
\]
For the mixed orientations,
\[
 \wt F_3=(U_2^*\otimes\Id)F_3(U_1\otimes\Id),
 \qquad
 \wt F_4=(U_1^*\otimes\Id)F_4(U_2\otimes\Id).
\]
Apply Lemma~\ref{lem:operator-ideal}\,(ii).
\end{proof}

\subsection{Continuous soft-incidence flattenings}

We first remove the order-zero amplitude and Galerkin factors.  Consider
\begin{equation}\label{eq:model-kernel}
 H(\ell,\eta,q,n)
 =A_N\rho_N(\ell)\rho_N(\eta)\chi_Q(q)\chi_M(n)
 k(n-q-\ell-\eta).
\end{equation}
Let $F_1,\dots,F_4$ be the integral operators with orientations
\[
 (\ell,\eta,q)\to n,\qquad
 (\ell,\eta,n)\to q,\qquad
 (\ell,q)\to(\eta,n),\qquad
 (\eta,q)\to(\ell,n).
\]

\begin{lemma}[Soft-incidence Schur fibers]
\label{lem:soft-schur}
Let the cutoffs be as in Section~\ref{sec:setup}, with uniform $L^\infty$ bounds and supports of volumes $O(N^d)$, $O(Q^d)$, and $O(M^d)$ at the corresponding scales.  Then the following bounds hold uniformly in the dyadic parameters:
\begin{align}
 \sup_n \iiint |k(z)|\rho_N(\ell)^2\chi_Q(q)^2
 \rho_N(n-q-\ell-z)^2\dd z\dd\ell\dd q
 &\lesssim \norm{k}_{L^1}N^dQ^d,
 \label{eq:schur-F1-fiber}\\
 \sup_q \iiint |k(z)|\rho_N(\ell)^2\chi_M(n)^2
 \rho_N(n-q-\ell-z)^2\dd z\dd\ell\dd n
 &\lesssim \norm{k}_{L^1}N^dM^d,
 \label{eq:schur-F2-fiber}\\
 \sup_{\eta,n}\iint |k(z)|\chi_Q(q)^2
 \rho_N(n-q-\eta-z)^2\dd z\dd q
 &\lesssim \norm{k}_{L^1}Q^d,
 \label{eq:schur-F3-first}\\
 \sup_{\ell,q,z}\int \rho_N(\eta)^2
 \chi_M(\ell+q+\eta+z)^2\dd\eta
 &\lesssim M^d.
 \label{eq:schur-F3-second}
\end{align}
The same estimates hold with the two high-frequency variables exchanged.
\end{lemma}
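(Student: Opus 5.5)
All four bounds will follow from the same two moves. First, bound each cutoff square by $\norm{\cdot}_\infty^2\le 9$ times the indicator of its support, or of a ball containing it. Second, integrate the remaining variables with Tonelli, in an order chosen so that the $z$-integral is done last and yields $\norm{k}_{L^1}$. Every change of variables used is an affine shear with determinant $\pm1$, as noted in Section~\ref{sec:setup}, so there is no Jacobian loss.

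For~\eqref{eq:schur-F1-fiber}, fix $n$ and bound $\rho_N(n-q-\ell-z)^2\le 9$. The integrand is then at most $9|k(z)|\rho_N(\ell)^2\chi_Q(q)^2$, which factors. We get
\[
 \norm{k}_{L^1}\norm{\rho_N}_{L^2}^2\norm{\chi_Q}_{L^2}^2\lesssim\norm{k}_{L^1}N^dQ^d,
\]
uniformly in $n$. Estimate~\eqref{eq:schur-F2-fiber} is identical with $(q,\chi_Q)$ replaced by $(n,\chi_M)$. For~\eqref{eq:schur-F3-first}, fix $(\eta,n)$ and bound the $\rho_N$ factor by $9$. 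This leaves $\int|k|\cdot\int\chi_Q^2\lesssim\norm{k}_{L^1}Q^d$. For~\eqref{eq:schur-F3-second}, bound $\rho_N(\eta)^2\le9$ and substitute $n=\ell+q+\eta+z$ in $\eta$ for fixed $(\ell,q,z)$. This is a translation, so the integral is at most $9\norm{\chi_M}_{L^2}^2\lesssim M^d$. The exchanged-variable versions follow verbatim, since $\rho_N$ is applied symmetrically to both high legs and the substitutions $\ell=n-q-\eta-z$ are again unimodular.

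The point needing care is uniformity of the supremum. The crude $L^\infty$ bound on the shifted factor makes each fiber integral independent of the fixed variables, so no convolution estimate is needed. We also use that only $L^2$ volumes of the remaining cutoffs enter, which are $O(N^d)$, $O(Q^d)$, $O(M^d)$ by the support hypotheses. The main obstacle is therefore just to remember at which step one sacrifices a cutoff. In~\eqref{eq:schur-F1-fiber} one deliberately loses the localization $|n-q-\ell-z|\sim N$ rather than exploit it. This is harmless because the claimed bound is the product volume. The sharper profiles $M^{d/2}+Q^{d/2}$ arise only later, after Schur interpolation with the Hilbert--Schmidt norms.
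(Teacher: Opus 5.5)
Your proposal is correct and takes essentially the same approach as the paper. In both, the shifted high-frequency cutoff is bounded by its sup norm, the remaining cutoffs are integrated over their $O(N^d)$, $O(Q^d)$, $O(M^d)$ supports (you use the equivalent $L^2$ norms), $|k|$ is integrated in $z$ to give $\norm{k}_{L^1}$, and the last bound is handled by the same translation in $\eta$ against $\norm{\chi_M}_{L^2}^2$.
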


\begin{proof}
For~\eqref{eq:schur-F1-fiber}, integrate first in $(\ell,q)$ and use the volume bounds of the supports after the affine substitution $\eta=n-q-\ell-z$.  The determinant of this substitution is one, and $\rho_N\le C$, so the fiber is bounded by $C\norm{k}_{L^1}N^dQ^d$.  The proof of~\eqref{eq:schur-F2-fiber} is identical with the output shell replacing the input shell.  Estimate~\eqref{eq:schur-F3-first} follows by integrating in $q$ on a set of volume $O(Q^d)$ and then in $z$ against $|k|$.  Finally,
\[
 \int \rho_N(\eta)^2\chi_M(\ell+q+\eta+z)^2\dd\eta
 \le \norm{\rho_N}_{L^\infty}^2\norm{\chi_M}_{L^2}^2
 \lesssim M^d,
\]
uniformly in $(\ell,q,z)$.  The exchanged high-frequency estimate is obtained by exchanging the two high-frequency variables.
\end{proof}

\begin{proposition}[Four continuous operator-norm bounds]
\label{prop:op-flat}
For the model kernel~\eqref{eq:model-kernel},
\begin{align}
 \norm{F_1}_{\rm op}
 &\lesssim |A_N|\norm{k}_{L^1}N^{d/2}Q^{d/2},
 \label{eq:opF1}\\
 \norm{F_2}_{\rm op}
 &\lesssim |A_N|\norm{k}_{L^1}N^{d/2}M^{d/2},
 \label{eq:opF2}\\
 \norm{F_3}_{\rm op}+\norm{F_4}_{\rm op}
 &\lesssim |A_N|\norm{k}_{L^1}Q^{d/2}M^{d/2}.
 \label{eq:opF34}
\end{align}
\end{proposition}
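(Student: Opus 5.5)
The plan is to prove each bound by the Schur test applied to the absolute value of the kernel~\eqref{eq:model-kernel}, using only $\norm{k}_{L^1}$ and the support volumes of the cutoffs. The fibers are the ones recorded in Lemma~\ref{lem:soft-schur}. Concretely, for an integral operator $T$ with kernel $K(y,x)$ I will use the bilinear form of the Schur test:
\[
 |\ip{Tu}{v}|\le\iint|K(y,x)||u(x)||v(y)|\dd x\dd y
 \le\Bigl(\sup_y\int|K|\dd x\Bigr)^{1/2}
 \Bigl(\sup_x\int|K|\dd y\Bigr)^{1/2}\norm{u}_{L^2}\norm{v}_{L^2}.
\]
This is Cauchy--Schwarz after splitting $|K|=|K|^{1/2}|K|^{1/2}$. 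Equivalently, one may split $|k|=|k|^{1/2}|k|^{1/2}$ and put the cutoffs on the appropriate side, which produces exactly the squared fibers of Lemma~\ref{lem:soft-schur}. By Lemma~\ref{lem:operator-ideal}\,(iii), cutoffs may be replaced by their absolute values, so everything reduces to two fiber integrals per orientation, each bounded by $\norm{\rho_N}_\infty,\norm{\chi_Q}_\infty,\norm{\chi_M}_\infty\le3$ times a support volume.

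For $F_1$, with orientation $(\ell,\eta,q)\to n$, the output fiber is
\[
 \sup_n\iiint|A_N||k(n-q-\ell-\eta)|\rho_N(\ell)\rho_N(\eta)\chi_Q(q)\dd\ell\dd\eta\dd q.
\]
Substitute $\eta=n-q-\ell-z$ (determinant $\pm1$), integrate $z$ against $|k|$, and integrate $(\ell,q)$ over supports of volume $O(N^dQ^d)$. This gives $|A_N|\norm{k}_{L^1}N^dQ^d$, which is~\eqref{eq:schur-F1-fiber}. The input fiber is
\[
 \sup_{\ell,\eta,q}\int|A_N||\chi_M(n)||k(n-q-\ell-\eta)|\dd n\le 3|A_N|\norm{k}_{L^1}.
\]
The geometric mean of the two fibers gives~\eqref{eq:opF1}. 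The bound~\eqref{eq:opF2} for $F_2$ is identical, with the output cutoff $\chi_M$ and the input cutoff $\chi_Q$ exchanging roles, as in~\eqref{eq:schur-F2-fiber}.

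The mixed orientations are the step needing care. A naive choice of fibers loses a factor $N^d$ through $\int\rho_N$, so the variables must be integrated in the right order. For $F_3$, with orientation $(\ell,q)\to(\eta,n)$, take first the fiber
\[
 \sup_{\eta,n}\iint|k(n-q-\ell-\eta)|\chi_Q(q)\rho_N(\ell)\dd\ell\dd q.
\]
Eliminate $\ell$ via $z$, so the $\ell$-integral becomes $\int|k(z)|\dd z$, and the $q$-integral costs $O(Q^d)$; this is~\eqref{eq:schur-F3-first}. Then take the fiber
\[
 \sup_{\ell,q}\iint|k(n-q-\ell-\eta)|\rho_N(\eta)\chi_M(n)\dd\eta\dd n.
\]
Here I integrate $\eta$ first through $z$, giving $\norm{k}_{L^1}$, and then $n$ over the support of $\chi_M$, giving $O(M^d)$. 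This is the point where one must not integrate $n$ against $k$, which would leave $\int\rho_N(\eta)\dd\eta\sim N^d$. The version~\eqref{eq:schur-F3-second} is the same estimate written with $z$ fixed. The geometric mean gives $|A_N|\norm{k}_{L^1}Q^{d/2}M^{d/2}$.

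The bound for $F_4$ follows by exchanging $\ell$ and $\eta$, since the kernel is symmetric in the two high-frequency variables up to the cutoffs $\rho_N$, which are identical. Together these give~\eqref{eq:opF34}.
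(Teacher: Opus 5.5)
Your proof is correct and is essentially the paper's argument. The paper runs a weighted Cauchy--Schwarz against $|k(z)|\,\mathrm{d}z$, which is an asymmetric Schur test with squared cutoffs on one side, while you use the symmetric $|K|^{1/2}|K|^{1/2}$ Schur test; the fibers agree with those of Lemma~\ref{lem:soft-schur} up to bounded cutoff factors, and your key step for $F_3,F_4$ (integrate $\eta$ against $k$ first, then pay only the $Q^d$ and $M^d$ shell volumes) is exactly what the paper does via the substitution $n=\ell+q+\eta+z$.
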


\begin{proof}
We prove~\eqref{eq:opF1} and the $F_3$ estimate.  The remaining estimates follow by symmetry.

For $F_1$, set $z=n-q-\ell-\eta$, so $\eta=n-q-\ell-z$.  Weighted Cauchy--Schwarz with respect to $|k(z)|\dd z\dd\ell\dd q$ gives
\[
 \begin{aligned}
 |F_1f(n)|^2
 \lesssim{}& |A_N|^2
 \left(\iiint |k(z)|\rho_N(\ell)^2\chi_Q(q)^2
 \rho_N(n-q-\ell-z)^2\dd z\dd\ell\dd q\right)\\
 &\times\left(\iiint |k(z)|
 |f(\ell,n-q-\ell-z,q)|^2\dd z\dd\ell\dd q\right).
 \end{aligned}
\]
The first factor is bounded by~\eqref{eq:schur-F1-fiber}.  Integrating the second factor in $n$ and using the unit-Jacobian change of variables $\eta=n-q-\ell-z$ gives $\norm{k}_{L^1}\norm{f}_2^2$.  This proves~\eqref{eq:opF1}.

For $F_3$, write $\ell=n-q-\eta-z$.  Then
\[
 \begin{aligned}
 |F_3f(\eta,n)|^2
 \lesssim{}& |A_N|^2\rho_N(\eta)^2\chi_M(n)^2
 \left(\iint |k(z)|\chi_Q(q)^2
 \rho_N(n-q-\eta-z)^2\dd z\dd q\right)\\
 &\times\left(\iint |k(z)|
 |f(n-q-\eta-z,q)|^2\dd z\dd q\right).
 \end{aligned}
\]
The first parenthesis is bounded by~\eqref{eq:schur-F3-first}.  To integrate the second term in $(\eta,n)$, fix $(q,z)$ and use the unit-Jacobian change of variables
\[
 (\eta,n)\longmapsto (\eta,\ell),\qquad n=\ell+q+\eta+z.
\]
Then
\[
 \begin{aligned}
 &\iint \rho_N(\eta)^2\chi_M(n)^2
 \left(\iint |k(z)| |f(n-q-\eta-z,q)|^2\dd z\dd q\right)
 \dd\eta\dd n\\
 &\qquad=\iint |k(z)|\int |f(\ell,q)|^2
 \left(\int\rho_N(\eta)^2\chi_M(\ell+q+\eta+z)^2\dd\eta\right)
 \dd\ell\dd q\dd z.
 \end{aligned}
\]
The inner $\eta$-fiber is bounded by~\eqref{eq:schur-F3-second}; hence the last display is at most $C\norm{k}_{L^1}M^d\norm{f}_{L^2_{\ell,q}}^2$.  This yields~\eqref{eq:opF34} for $F_3$; $F_4$ is identical after exchanging $\ell$ and $\eta$.
\end{proof}

\begin{lemma}[Common Hilbert--Schmidt bound]
\label{lem:HS}
Each flattening of~\eqref{eq:model-kernel} has the same Hilbert--Schmidt norm, and
\begin{equation}\label{eq:HS}
 \norm{F_j}_{\Sch_2}
 \lesssim |A_N|\norm{k}_{L^2}
 N^{d/2}Q^{d/2}M^{d/2},\qquad j=1,2,3,4.
\end{equation}
\end{lemma}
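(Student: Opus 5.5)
The plan has two steps: first show that all four flattenings share one Hilbert--Schmidt norm, then compute that norm directly with a single unit-Jacobian substitution.

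\textbf{Step 1: a common Hilbert--Schmidt norm.} Each $F_j$ is an integral operator between $L^2$ spaces in the frequency variables. Its integral kernel is the same function $H(\ell,\eta,q,n)$, with the four variables merely grouped into input and output differently. For an integral operator between $L^2$ spaces, the Hilbert--Schmidt norm equals the $L^2$ norm of the kernel on the product space. By Lemma~\ref{lem:operator-ideal}\,(iii), this norm does not depend on how the tensor legs are regrouped. Hence
\[
 \norm{F_j}_{\Sch_2}=\norm{H}_{L^2(\dd\ell\dd\eta\dd q\dd n)},\qquad j=1,2,3,4.
\]

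\textbf{Step 2: computing $\norm{H}_{L^2}$.} We write out
\[
 \norm{H}_{L^2}^2=|A_N|^2\iiiint\rho_N(\ell)^2\rho_N(\eta)^2\chi_Q(q)^2\chi_M(n)^2\,|k(n-q-\ell-\eta)|^2\dd\ell\dd\eta\dd q\dd n .
\]
For fixed $(\ell,q,n)$ we substitute $z=n-q-\ell-\eta$ in the $\eta$ integral. This is an affine map with Jacobian $\pm1$, so no constant is lost. We then bound $\rho_N(n-q-\ell-z)^2\le\norm{\rho}_\infty^2\le 9$, which leaves
\[
 \int|k(z)|^2\dd z=\norm{k}_{L^2}^2 .
\]
The remaining integrals in $(\ell,q,n)$ factor as
\[
 \norm{\rho_N}_{L^2}^2\,\norm{\chi_Q}_{L^2}^2\,\norm{\chi_M}_{L^2}^2\lesssim N^dQ^dM^d .
\]
Taking square roots gives~\eqref{eq:HS}.

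\textbf{Where care is needed.} There is no real obstacle; the two points below are the only ones to check.
\begin{itemize}
\item The high-frequency factor that absorbs the incidence variable must be estimated in $L^\infty$, not $L^2$. Otherwise a spurious extra $N^{d/2}$ would appear. Dropping it in $L^\infty$ is exactly what makes the bound depend on $\norm{k}_{L^2}$ rather than on $\norm{k}_{L^1}$.
\item The kernel must be measurable and square integrable, so that the identification of the Hilbert--Schmidt norm with the kernel's $L^2$ norm is legitimate. This follows from the computation itself, since the bound is finite.
\end{itemize}
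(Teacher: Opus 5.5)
Your proposal is correct and follows essentially the same route as the paper: all four Hilbert--Schmidt norms equal the $L^2$ norm of the kernel $H$ (by invariance under regrouping), and the unit-Jacobian substitution $z=n-q-\ell-\eta$ plus discarding one bounded high-frequency cutoff in $L^\infty$ gives $C|A_N|^2\norm{k}_{L^2}^2N^dQ^dM^d$. The only difference is that you write out the factorization $\norm{\rho_N}_{L^2}^2\norm{\chi_Q}_{L^2}^2\norm{\chi_M}_{L^2}^2$ explicitly, which the paper leaves implicit.
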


\begin{proof}
The Hilbert--Schmidt norm is invariant under regrouping the four variables.  The square of this norm is bounded by
\[
 |A_N|^2\int \rho_N(\ell)^2\rho_N(\eta)^2
 \chi_Q(q)^2\chi_M(n)^2|k(n-q-\ell-\eta)|^2
 \dd\ell\dd\eta\dd q\dd n.
\]
Set $z=n-q-\ell-\eta$.  This affine change has determinant one.  Discarding one bounded high-frequency cutoff, the remaining integral is at most
$C|A_N|^2N^dQ^dM^d\norm{k}_{L^2}^2$.
\end{proof}

\begin{proposition}[Four Schatten flattenings]
\label{prop:Schatten}
For $2\le r<\infty$, let $C_k=\max\{\norm{k}_{L^1},\norm{k}_{L^2}\}$.  Then
\begin{align}
 \norm{F_1}_{\Sch_r}
 &\lesssim |A_N|C_kN^{d/2}Q^{d/2}M^{d/r},
 \label{eq:Sr1}\\
 \norm{F_2}_{\Sch_r}
 &\lesssim |A_N|C_kN^{d/2}M^{d/2}Q^{d/r},
 \label{eq:Sr2}\\
 \norm{F_3}_{\Sch_r}+\norm{F_4}_{\Sch_r}
 &\lesssim |A_N|C_kQ^{d/2}M^{d/2}N^{d/r}.
 \label{eq:Sr34}
\end{align}
\end{proposition}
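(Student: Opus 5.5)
The bounds follow directly from the Schatten interpolation inequality \eqref{eq:Schatten-interp} of Lemma~\ref{lem:operator-ideal}\,(i), fed with the operator norms of Proposition~\ref{prop:op-flat} and the common Hilbert--Schmidt bound of Lemma~\ref{lem:HS}. For each $j$ we write
\[
 \norm{F_j}_{\Sch_r}\le \norm{F_j}_{\rm op}^{1-2/r}\norm{F_j}_{\Sch_2}^{2/r},
\]
which is legitimate because Lemma~\ref{lem:HS} shows that every $F_j$ is Hilbert--Schmidt, hence bounded. We also bound $\norm{k}_{L^1}$ and $\norm{k}_{L^2}$ by $C_k$, so the two factors combine to $|A_N|C_k$ with total exponent $(1-2/r)+2/r=1$.

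For $F_1$, Proposition~\ref{prop:op-flat} gives $\norm{F_1}_{\rm op}\lesssim |A_N|C_kN^{d/2}Q^{d/2}$, and Lemma~\ref{lem:HS} gives $\norm{F_1}_{\Sch_2}\lesssim |A_N|C_kN^{d/2}Q^{d/2}M^{d/2}$. In the product, the common factor $N^{d/2}Q^{d/2}$ appears with exponent $(1-2/r)+2/r=1$. The only leftover is $(M^{d/2})^{2/r}=M^{d/r}$, which is \eqref{eq:Sr1}. The bound \eqref{eq:Sr2} follows in exactly the same way from \eqref{eq:opF2}: the common factor is $N^{d/2}M^{d/2}$ and the leftover is $Q^{d/r}$. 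For $F_3$ and $F_4$ we use \eqref{eq:opF34}: the common factor is $Q^{d/2}M^{d/2}$ and the leftover is $(N^{d/2})^{2/r}=N^{d/r}$, giving \eqref{eq:Sr34}. We then add the two resulting bounds for $F_3$ and $F_4$.

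The implicit constants are raised to the powers $1-2/r$ and $2/r$. They are therefore bounded uniformly in $r\ge2$ by the maximum of the operator-norm and Hilbert--Schmidt constants, so they depend neither on $r$ nor on the dyadic scales.

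There is no real obstacle. The one point to check is that \eqref{eq:Schatten-interp} requires $F_j\in\Sch_2$, and Lemma~\ref{lem:HS} supplies this because $k\in L^2$. The only other care needed is to keep the $L^1$ and $L^2$ norms of $k$ separate until they are replaced by $C_k$.
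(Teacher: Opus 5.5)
Your proposal is correct and is the paper's own argument: interpolate via \eqref{eq:Schatten-interp} between the operator-norm bounds of Proposition~\ref{prop:op-flat} and the common Hilbert--Schmidt bound of Lemma~\ref{lem:HS}. Your exponent bookkeeping spells out what the paper's one-line proof leaves implicit, including the observations that $\norm{k}_{L^1}^{1-2/r}\norm{k}_{L^2}^{2/r}\le C_k$ and that the implicit constants are uniform in $r$.
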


\begin{proof}
Use Lemma~\ref{lem:operator-ideal}\,(i) and combine Proposition~\ref{prop:op-flat} with Lemma~\ref{lem:HS}.
\end{proof}

\subsection{Order-zero amplitude stability}

\begin{lemma}[Normalized Fourier expansion]
\label{lem:fourier-exp}
Let $K>4d$.  On the support of the four dyadic cutoffs, every amplitude $m$ with finite norm~\eqref{eq:symbol-norm} admits an expansion
\begin{equation}\label{eq:fourier-exp}
 m(\ell,\eta,q,n)
 =\sum_{\nu\in\mathbb Z^{4d}}c_\nu
 e^{i\theta_1\nu_1\cdot\ell/N}
 e^{i\theta_2\nu_2\cdot\eta/N}
 e^{i\theta_3\nu_3\cdot q/Q}
 e^{i\theta_4\nu_4\cdot n/M},
\end{equation}
where the $\theta_j$ depend only on a fixed normalized box and
\begin{equation}\label{eq:l1-fourier}
 \sum_\nu|c_\nu|
 \lesssim_{d,K}\norm{m}_{\Mclass^K_{N,Q,M}}.
\end{equation}
\end{lemma}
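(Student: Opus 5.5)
We rescale to a fixed box, extend periodically, and read off the Fourier series; absolute summability of the coefficients will come from integrating by parts $K$ times in each block of variables.

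\emph{Step 1: normalize.} Put $\tilde m(u,v,x,y):=m(Nu,Nv,Qx,My)$. The dyadic supports of $\rho_N,\chi_Q,\chi_M$, on a fixed enlargement, are mapped into a fixed compact set $B_0\subset\R^{4d}$ that is independent of $(N,Q,M)$. By the definition~\eqref{eq:symbol-norm},
\[
\norm{\partial^\alpha\tilde m}_{L^\infty}\le\norm{m}_{\Mclass^K_{N,Q,M}}\qquad\text{for all }|\alpha|\le K .
\]

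\emph{Step 2: cutoff and periodization.} Choose a fixed $\psi\in C_c^\infty$ with $\psi=1$ on $B_0$, supported in a fixed cube $[-L,L]^{4d}$ that lies inside the enlarged support where $m$ is defined. Set $\theta_j:=\pi/L$. The function $\psi\tilde m$ is $C^K$ and compactly supported in the open cube, so it extends to a $2L$-periodic $C^K$ function. By the Leibniz rule its $C^K$ norm is $\lesssim_{d,K}\norm{m}_{\Mclass^K}$, since $\psi$ is fixed.

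\emph{Step 3: coefficient decay.} Expand
\[
\psi\tilde m=\sum_{\nu\in\mathbb Z^{4d}} c_\nu\, e^{i\pi\nu\cdot w/L},\qquad w=(u,v,x,y).
\]
For each coordinate block $j$, integrate by parts $K$ times. With $K$ even this uses $\Delta_{w_j}^{K/2}$; alternatively use the largest coordinate of $\nu_j$. This gives
\[
|c_\nu|\lesssim_{d,K}\norm{m}_{\Mclass^K}\,(1+|\nu|)^{-K}.
\]
More precisely, apply the bound only to the block of $\nu$ with the largest norm, which needs at most $K$ derivatives in total, so $|c_\nu|\lesssim (1+|\nu|_\infty)^{-K}\norm{m}_{\Mclass^K}$. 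Since $K>4d$, the sum $\sum_\nu(1+|\nu|)^{-K}$ over $\mathbb Z^{4d}$ converges, which gives~\eqref{eq:l1-fourier}.

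\emph{Step 4: undo the scaling.} On the support of the cutoffs $\psi=1$, so substituting $u=\ell/N$, $v=\eta/N$, $x=q/Q$, $y=n/M$ gives~\eqref{eq:fourier-exp}. The series converges absolutely and uniformly there.

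The only delicate point is Step 2. We need a fixed cube that contains the normalized support and sits inside the domain where $m$ is controlled. This is guaranteed by the phrase ``fixed enlargement of the dyadic support'': we take the enlargement to contain $[-L,L]^{4d}$ after normalization, shrinking $\psi$'s support if necessary.
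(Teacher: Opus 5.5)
Your proposal is correct and is essentially the paper's argument: normalize by $(\ell/N,\eta/N,q/Q,n/M)$, multiply by a fixed cutoff equal to one on the normalized support, periodize on a fixed box, integrate by parts $K$ times to get $|c_\nu|\lesssim\langle\nu\rangle^{-K}\norm{m}_{\Mclass^K_{N,Q,M}}$, and sum using $K>4d$. One small correction to your last paragraph: the whole cube $[-L,L]^{4d}$ need not lie in the domain of $m$, and it cannot if the enlargement of $\supp\rho$ is annular, since the cube contains the origin; it suffices that $\supp\psi$ lies in the enlargement and in the open cube, with $\psi\tilde m$ extended by zero.
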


\begin{proof}
After the normalized change of variables, multiply by a fixed cutoff equal to one on the dyadic support and periodize on a fixed box in $\R^{4d}$.  The enlarged normalized boxes and the auxiliary cutoff are chosen once and for all, uniformly in the dyadic triple $(N,Q,M)$.  Integration by parts $K$ times gives
\[
 |c_\nu|\lesssim\langle\nu\rangle^{-K}
 \norm{m}_{\Mclass^K_{N,Q,M}}.
\]
Since $K>4d$, the Fourier coefficients are absolutely summable.
\end{proof}

Each Fourier mode in~\eqref{eq:fourier-exp} is a product of four unimodular multipliers, one on each Hilbert leg.  It therefore acts by unitary multiplication on the source and target of every oriented flattening.

\begin{corollary}[Order-zero four-flattening estimate]
\label{cor:symbol-flat}
Let
\[
 H^m=A_N\rho_N(\ell)\rho_N(\eta)\chi_Q(q)\chi_M(n)
 m(\ell,\eta,q,n)k(n-q-\ell-\eta).
\]
Then the bounds~\eqref{eq:Sr1}--\eqref{eq:Sr34} hold with the additional factor
$\norm{m}_{\Mclass^K_{N,Q,M}}$.
\end{corollary}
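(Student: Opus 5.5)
We plan to reduce Corollary~\ref{cor:symbol-flat} to Proposition~\ref{prop:Schatten} by expanding the amplitude into products of modulations. First apply Lemma~\ref{lem:fourier-exp} to $m$. The product $\rho_N(\ell)\rho_N(\eta)\chi_Q(q)\chi_M(n)$ vanishes off the dyadic supports, so $H^m$ equals
\[
 H^m=\sum_{\nu\in\mathbb Z^{4d}}c_\nu\,
 e_{\nu_1}(\ell)e_{\nu_2}(\eta)e_{\nu_3}(q)e_{\nu_4}(n)\,H(\ell,\eta,q,n)
\]
pointwise. Here $H$ is the model kernel~\eqref{eq:model-kernel}, and $e_{\nu_1}(\ell)=e^{i\theta_1\nu_1\cdot\ell/N}$, with the other factors defined analogously. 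The sum converges absolutely and uniformly, because $\sum|c_\nu|<\infty$ and $H$ is bounded on its support.

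Next we identify the flattenings of a single mode. For fixed $\nu$ and orientation $j$, the kernel $e_{\nu_1}\otimes e_{\nu_2}\otimes e_{\nu_3}\otimes e_{\nu_4}\cdot H$ has flattening $F_j^{(\nu)}=D_{\rm out}F_jD_{\rm in}$. The factors $D_{\rm in}$ and $D_{\rm out}$ are multiplication operators by the unimodular factors belonging to the source variables and the target variables. For example, for $F_3$ we have $D_{\rm in}=e_{\nu_1}(\ell)e_{\nu_3}(q)$ on $L^2_{\ell,q}$ and $D_{\rm out}=e_{\nu_2}(\eta)e_{\nu_4}(n)$ on $L^2_{\eta,n}$.

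The bilinear pairing needs one point of care. The flattening identity involves the target variables without conjugation. Under the complexification convention this gives multiplication by the conjugate character on the target side, which is still unitary. By Lemma~\ref{lem:operator-ideal}\,(iii), or (ii) with unitary factors, $\norm{F_j^{(\nu)}}_{\Sch_r}=\norm{F_j}_{\Sch_r}$.

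We then sum over $\nu$. Since $\mathbf H\mapsto F_j$ is linear, the triangle inequality in $\Sch_r$ gives
\[
 \norm{F_j^m}_{\Sch_r}\le\sum_\nu|c_\nu|\,\norm{F_j}_{\Sch_r}\lesssim_{d,K}\norm{m}_{\Mclass^K_{N,Q,M}}\norm{F_j}_{\Sch_r}.
\]
Combining this with \eqref{eq:Sr1}--\eqref{eq:Sr34} yields the claim.

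To make sure the series of operators converges in $\Sch_r$ to $F_j^m$, and not just formally, we argue as follows. The partial sums are Cauchy in $\Sch_r$ by the bound above. They also converge to $F_j^m$ in $\Sch_2$. Indeed, the kernels converge in $L^2$ by dominated convergence, with dominating function $\sum|c_\nu|\cdot|H|\in L^2$ thanks to Lemma~\ref{lem:HS}. The $\Sch_r$ limit and the $\Sch_2$ limit therefore coincide.

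The main obstacle is bookkeeping rather than analysis. Every Fourier mode must factor as a tensor product of one-variable characters, so that it becomes a two-sided unitary multiplication in each of the four orientations; the mixed orientations $F_3$ and $F_4$ are the ones to check. This factorization is exactly what the product form~\eqref{eq:fourier-exp} provides, so no interaction with the incidence kernel $k(n-q-\ell-\eta)$ arises. The real-versus-complex Hilbert space issue is handled by the convention stated in Section~\ref{sec:setup}, and costs only a universal constant.
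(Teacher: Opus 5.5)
Your proposal is correct and follows the paper's own proof: expand $m$ by Lemma~\ref{lem:fourier-exp}, note that each mode acts as a two-sided unitary modulation on the source and target of every flattening, so its Schatten norm is unchanged, and then sum using the $\ell^1$ bound~\eqref{eq:l1-fourier}. Your check that the series actually converges to $F_j^m$ fills in a step the paper leaves implicit. Two small remarks: $H$ need not be bounded, since $k\in L^1\cap L^2$ only, but the uniform convergence you claim is never used; and $\Sch_2$ convergence already gives $\Sch_r$ convergence for $r\ge2$, so the separate Cauchy argument in $\Sch_r$ is redundant.
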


\begin{proof}
Expand $m$ by Lemma~\ref{lem:fourier-exp}.  Schatten norms are invariant under the four unitary modulations, and the triangle inequality together with~\eqref{eq:l1-fourier} completes the proof.
\end{proof}

\section{Time lift, dyadic assembly, and pathwise convergence}
\label{sec:global}

\subsection{Time-dependent centered block estimate}

We assume time regularity only through normalized coefficient and covariance-map increments.

\begin{assumption}[Time increments and cutoff stability]
\label{ass:time}
For every sufficiently small $0<\theta\le\theta_0$, there are constants $C_\theta$ and nonnegative scales
$\Theta_0(N),\Theta_1(N),\Theta_2(N)$ such that, uniformly in all cutoffs,
\begin{align}
 \norm{m_{\Lambda,N,Q,M,u}-m_{\Lambda,N,Q,M,v}}_{\Mclass^K_{N,Q,M}}
 &\le C_\theta\Theta_0(N)^\theta d_\Delta(u,v)^\theta,
 \label{eq:m-inc}\\
 \norm{S_{1,\Lambda,N,s}-S_{1,\Lambda,N,s'}}_{\rm op}
 &\le C_\theta\mathcal A_1(N)\Theta_1(N)^\theta|s-s'|^\theta,
 \label{eq:S1-inc}\\
 \norm{S_{2,\Lambda,N,t}-S_{2,\Lambda,N,t'}}_{\rm op}
 &\le C_\theta\mathcal A_2(N)\Theta_2(N)^\theta|t-t'|^\theta.
 \label{eq:S2-inc}
\end{align}
Here $d_\Delta((t,s),(t',s'))=|t-t'|+|s-s'|$.  We also assume the absorption condition: for every $\eps>0$ one can choose $\theta>0$ so small that
\begin{equation}\label{eq:time-absorb}
 \bigl(1+\Theta_0(N)+\Theta_1(N)+\Theta_2(N)\bigr)^\theta
 \lesssim_\eps N^\eps.
\end{equation}
Finally, for every fixed $(N,Q,M)$ the normalized amplitudes and synthesis maps converge in the corresponding $C^\theta$ norms as $\Lambda\to\infty$.  For the pathwise full-sequence theorem we impose the standard Galerkin-state condition: for every fixed dyadic triple, the family
\[
 (m_{\Lambda,N,Q,M},S_{1,\Lambda,N,\cdot},S_{2,\Lambda,N,\cdot})
\]
assumes at most $L_{\rm cut}$ distinct states, with $L_{\rm cut}$ independent of the scales, and is eventually equal to the limiting state.  For the standard rescaled dyadic Galerkin multipliers introduced above, this is a finite relative-position statement: for fixed $N$, the support of $\rho_N$ lies in $\{r_0N\le |\xi|\le R_0N\}$, so the ratio $\Lambda/N$ has only $O_{r_0,R_0}(1)$ relevant dyadic positions on this shell and the cutoff becomes identically one once $\Lambda\ge R_0N$.  If one only assumes convergence rather than eventual stabilization, the $L^p$ convergence statement remains valid, while the full-sequence pathwise theorem below should be read with this finite-state hypothesis included.
\end{assumption}

Polynomial time regularity of the normalized amplitudes and synthesis maps is a standard sufficient condition for \eqref{eq:m-inc}--\eqref{eq:time-absorb}: small Holder exponents absorb polynomial losses into $N^\eps$.

\begin{lemma}[Two-parameter Banach-valued chaining]
\label{lem:time-lift}
Let $X(u)$, $u\in\Delta_T$, take values in a separable Banach space $B$.  Suppose that for some $p_0\ge2$, $0<\theta\le1$, and constants $R_0,R_1$,
\[
 \sup_u\norm{X(u)}_{L^{p_0}(\Omega;B)}\le R_0,
 \qquad
 \norm{X(u)-X(v)}_{L^{p_0}(\Omega;B)}
 \le R_1d_\Delta(u,v)^\theta.
\]
If $p_0>4/\theta$, then $X$ has a separable continuous modification and, for every $2\le p\le p_0$,
\begin{equation}\label{eq:time-lift}
 \left\|\norm{X}_{C(\Delta_T;B)}\right\|_{L^p(\Omega)}
 \lesssim_{p,p_0,\theta,T}R_0+R_1.
\end{equation}
\end{lemma}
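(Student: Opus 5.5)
This is a Banach-valued Kolmogorov--Chentsov statement on the two-dimensional parameter set $\Delta_T$, and I would prove it by Garsia--Rodemich--Rumsey or, equivalently, by dyadic chaining.

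\emph{Reduction to dyadic points.} Embed $\Delta_T$ in the square $[0,T]^2$. Work on the dyadic grids $D_k=\Delta_T\cap (T2^{-k}\mathbb Z)^2$. Each $D_k$ has at most $C 4^k$ points. Every point of $D_{k+1}$ has a neighbour in $D_k$ at $d_\Delta$-distance at most $2T2^{-k}$; in the triangle one may take the neighbour inside $\Delta_T$, since $\Delta_T$ is convex and its vertices lie on the grid. Set
\[
 \Xi_k:=\max_{(u,v)}\norm{X(u)-X(v)}_B,
\]
where the maximum runs over neighbouring pairs $(u,v)$ with $u\in D_{k+1}$ and $v\in D_k$.

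\emph{Bounding the increments.} Bound the maximum by the $\ell^{p_0}$ sum over the $O(4^k)$ pairs. This gives
\[
 \norm{\Xi_k}_{L^{p_0}(\Omega)}
 \lesssim 4^{k/p_0}R_1 2^{-k\theta}
 = R_1 2^{-k(\theta-2/p_0)}.
\]
The exponent $\theta-2/p_0$ is positive because $p_0>4/\theta$; in fact $p_0>2/\theta$ would already suffice. Then, for $u\in D_j$ and by telescoping along the chain of nearest neighbours,
\[
 \norm{X(u)-X(u_0)}_B\le\sum_{k\ge0}\Xi_k .
\]
A standard refinement gives the modulus bound
\[
 \norm{X(u)-X(v)}_B\lesssim \sum_{k\ge k_0}\Xi_k
 \quad\text{for dyadic } u,v \text{ with } d_\Delta(u,v)\le 2^{-k_0}.
\]
Hence the random variable $\sum_k\Xi_k$ lies in $L^{p_0}$, with norm $\lesssim R_1$. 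It also follows that $X|_{\bigcup D_k}$ is almost surely uniformly continuous, indeed H\"older of any exponent below $\theta-2/p_0$.

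\emph{Continuous modification and the sup bound.} Because $B$ is complete, the restriction of $X$ to the dyadic points extends uniquely to a continuous $\tilde X$ on $\Delta_T$. For each fixed $u$, choose dyadic $u_k\to u$. The hypothesis gives $X(u_k)\to X(u)$ in $L^{p_0}(\Omega;B)$, while $X(u_k)\to\tilde X(u)$ almost surely. So $\tilde X(u)=X(u)$ almost surely, and $\tilde X$ is a modification. It takes values in a separable space because its image is the closure of a countable set. For the supremum,
\[
 \sup_u\norm{\tilde X(u)}_B\le\norm{X(u_0)}_B+\sum_k\Xi_k .
\]
Taking $L^{p_0}$ norms gives $\lesssim R_0+R_1$, with constants depending on $p_0,\theta,T$. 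Finally, for $2\le p\le p_0$ on the probability space, H\"older's inequality gives the $L^p$ bound.

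\emph{Main obstacle.} The only real issue is measurability of $\norm{X}_{C(\Delta_T;B)}$. I would handle it by defining the supremum over the countable dense dyadic set, which is legitimate because $\tilde X$ is continuous. Everything else is routine.
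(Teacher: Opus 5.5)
Your proposal is correct and follows essentially the same route as the paper: dyadic chaining on grids in the simplex with $O(4^k)$ points, a parent at distance $O(2^{-k})$ inside $\Delta_T$, the bound $R_1 2^{-k(\theta-2/p_0)}$ for the $L^{p_0}$ norm of the maximal increment, completion to a continuous modification, and Minkowski's inequality together with monotonicity of $L^p$ norms. Your remark that $p_0>2/\theta$ already suffices is right. One small fix: the coarsest grid contains all three vertices of $\Delta_T$, so the base term should be $\max_{v\in D_0}\norm{X(v)}_B$ rather than $\norm{X(u_0)}_B$; its $L^{p_0}$ norm is still at most $3^{1/p_0}R_0$.
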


\begin{proof}
Use dyadic rational grids in the two-dimensional simplex.  At level $j$ there are $O_T(2^{2j})$ grid points.  Choose a parent in the coarser grid at distance $O(2^{-j})$, remaining inside the simplex.  The $L^{p_0}$ norm of the maximal increment at level $j$ is bounded by
\[
 C R_1 2^{-j\theta}2^{2j/p_0}.
\]
The series is summable because $p_0>4/\theta$.  Chaining on the rational skeleton and completion give a continuous modification.  Minkowski's inequality yields~\eqref{eq:time-lift}.
\end{proof}

\begin{theorem}[Uniform centered dyadic block]
\label{thm:block}
Assume~\eqref{eq:m-size}, \eqref{eq:S-size}, and Assumption~\ref{ass:time}.  For every finite $p\ge2$ and every $\eps>0$,
\begin{equation}\label{eq:block-bound}
 \sup_{\Lambda\in\Dyd\cup\{\infty\}}
 \left\|\,
 \norm{B^{\circ}_{\Lambda,N,Q,M}}_{C(\Delta_T;\cL(L^2_q,L^2_n))}
 \right\|_{L^p(\Omega)}
 \lesssim_{p,T,\eps}
 C_m C_S^2 C_k\mathcal G(N)
 N^{d/2+\eps}
 \bigl(Q^{d/2+\eps}+M^{d/2+\eps}\bigr).
\end{equation}
Under the Galerkin-state condition in Assumption~\ref{ass:time}, the same estimate holds with the supremum over $\Lambda$ inside the $L^p$ norm, with a constant depending additionally on the uniform finite-state bound $L_{\rm cut}$.  For each fixed dyadic triple,
\begin{equation}\label{eq:fixed-block-conv}
 B^{\circ}_{\Lambda,N,Q,M}
 \longrightarrow B^{\circ}_{\infty,N,Q,M}
 \quad\text{in }L^p\bigl(\Omega;C(\Delta_T;\cL(L^2_q,L^2_n))\bigr).
\end{equation}
\end{theorem}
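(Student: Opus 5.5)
The plan is to reduce the block to a fixed-time estimate, then lift it in time with Lemma~\ref{lem:time-lift}, and treat convergence in $\Lambda$ by applying the same estimate to differences.

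\emph{Fixed-time bound.} Fix $u=(t,s)$ and $\Lambda$. Corollary~\ref{cor:symbol-flat} applies to $H^0_{\Lambda,N,Q,M,u}$ with $A_N=\Kdet(N)$ and amplitude $m_{\Lambda,N,Q,M,u}$. The Galerkin factors $c^{(j)}_\Lambda$ have modulus at most $1$ and only multiply the high legs, so they are absorbed into the cutoff majorants. Corollary~\ref{cor:symbol-flat} together with \eqref{eq:m-size} then bounds the four flattenings by $C_mC_k\Kdet(N)$ times the profiles in \eqref{eq:Sr1}--\eqref{eq:Sr34}. Lemma~\ref{lem:bessel}, with $U_1=S_{1,\Lambda,N,s}^*$ and $U_2=S_{2,\Lambda,N,t}^*$, transfers these bounds to the pullback \eqref{eq:pullback} at a cost of $C_S^2\mathcal A_1(N)\mathcal A_2(N)$ by \eqref{eq:S-size}, which produces $\mathcal G(N)$. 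Theorem~\ref{thm:four-abstract} now gives, for $2\le p\le r$,
\[
\bigl\|\norm{B^\circ(u)}_{\rm op}\bigr\|_{L^p}\lesssim r\,C_mC_S^2C_k\mathcal G(N)\bigl(N^{d/2}Q^{d/2}M^{d/r}+N^{d/2}M^{d/2}Q^{d/r}+Q^{d/2}M^{d/2}N^{d/r}\bigr).
\]
For the same-family case we invoke the Wick part of that theorem. Choose $r$ large, depending on $\eps$, so that $d/r\le\eps$. Then $Q,M\lesssim N$ from \eqref{eq:dyadic-set} gives $M^{d/r}\le N^{\eps}$ and $Q^{d/2}M^{d/2}N^{d/r}\lesssim N^{d/2+\eps}M^{d/2}$, so every term is dominated by $N^{d/2+\eps}(Q^{d/2+\eps}+M^{d/2+\eps})$.

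\emph{Increments and time lift.} Write $B^\circ(u)-B^\circ(v)$ as a telescoping sum of three pieces, in each of which exactly one of $m$, $S_1$, $S_2$ is replaced by its increment. The operator is multilinear in these data, so the same chain of estimates applies to each piece with the size bound replaced by \eqref{eq:m-inc}--\eqref{eq:S2-inc}. This gives an increment bound with the extra factor $(1+\Theta_0+\Theta_1+\Theta_2)^\theta d_\Delta(u,v)^\theta$. Choosing $\theta$ small, \eqref{eq:time-absorb} turns that factor into $N^{\eps}$. We then apply Lemma~\ref{lem:time-lift} in $B=\cK(L^2_q,L^2_n)$, which is separable, with $p_0>\max(p,4/\theta)$ and $r\ge p_0$. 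This yields \eqref{eq:block-bound}, and since no constant depends on $\Lambda$, the bound is uniform in $\Lambda$.

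\emph{Supremum inside and convergence.} Under the Galerkin-state condition, for a fixed triple $\sup_\Lambda$ runs over at most $L_{\rm cut}$ distinct random processes. Bounding the supremum by the sum of these (or by their $\ell^p$ sum) gives the estimate with the supremum inside, at a cost of a factor $L_{\rm cut}$. For \eqref{eq:fixed-block-conv}, apply the same multilinear telescoping to $B^\circ_\Lambda-B^\circ_\infty$, both for the fixed-time norms and for the increments. This is controlled by the $C^\theta$ distances of $m_\Lambda$ and $S_{j,\Lambda}$ from their limits, which tend to zero by Assumption~\ref{ass:time}. Lemma~\ref{lem:time-lift} then gives convergence in $L^p(\Omega;C(\Delta_T;\cL))$.

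The main obstacle I expect is the Galerkin factors inside the modulated kernel. Corollary~\ref{cor:symbol-flat} was stated for $\rho_N$, so we must check that the Schur and Hilbert--Schmidt majorants only use $|\rho_{\Lambda,N}|\le|\rho_N|$, and that symmetrization in the same-family case preserves the profiles. The remaining bookkeeping concerns the exponents: $\theta$ must be fixed first, from $\eps$, then $p_0>4/\theta$, then $r\ge p_0$ with $d/r\le\eps$. The factor $r$ in Theorem~\ref{thm:four-abstract} is then only a constant depending on $(p,\eps)$.
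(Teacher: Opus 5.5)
Your proposal is correct and follows the paper's proof essentially step for step: the fixed-time bound via Corollary~\ref{cor:symbol-flat}, Lemma~\ref{lem:bessel} and Theorem~\ref{thm:four-abstract}, the three-term telescope for increments, absorption via \eqref{eq:time-absorb}, chaining in the separable space $\cK(L^2_q,L^2_n)$, the finite-state bound for the supremum inside, and the difference family for \eqref{eq:fixed-block-conv}. The only deviation is harmless. You fix $r\ge\max\{p_0,d/\eps\}$, so that $N^{d/r},Q^{d/r},M^{d/r}\lesssim N^{\eps}$ and $r$ becomes a $(p,\eps)$-constant, whereas the paper takes $r\simeq p_0+\log(2+NQM)$ to make these factors bounded and absorbs $r$ into $(NQM)^{\eps/4}$. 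In the convergence step you should also say that $\rho^{(j)}_{\Lambda,N}=\rho_N$ once $\Lambda\ge R_0N$, so the Galerkin factors contribute no difference there.
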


\begin{proof}
At fixed $u$, Corollary~\ref{cor:symbol-flat}, Lemma~\ref{lem:bessel}, and Theorem~\ref{thm:four-abstract} give, for $r\ge p_0\ge p$,
\[
 \begin{aligned}
 \left\|\norm{B^{\circ}_{\Lambda,N,Q,M}(u)}_{\rm op}\right\|_{L^{p_0}}
 \lesssim{}& r C_m C_S^2 C_k\mathcal G(N)\\
 &\times\left[
 N^{d/2}Q^{d/2}M^{d/r}
 +N^{d/2}M^{d/2}Q^{d/r}
 +Q^{d/2}M^{d/2}N^{d/r}
 \right].
 \end{aligned}
\]
The exact three-term telescope for the covariance pullback is
\[
 \begin{aligned}
 \wt{\mathbf H}_u-\wt{\mathbf H}_v
 ={}&(\mathbf H^0_u-\mathbf H^0_v)
 (S_{1,u}^*\cdot,S_{2,u}^*\cdot,\cdot,\cdot)\\
 &+\mathbf H^0_v((S_{1,u}^*-S_{1,v}^*)\cdot,S_{2,u}^*\cdot,\cdot,\cdot)\\
 &+\mathbf H^0_v(S_{1,v}^*\cdot,(S_{2,u}^*-S_{2,v}^*)\cdot,\cdot,\cdot).
 \end{aligned}
\]
Using Assumption~\ref{ass:time}, the same four Schatten estimates hold for increments with the additional factor
\[
 C_\theta\bigl(1+\Theta_0(N)+\Theta_1(N)+\Theta_2(N)\bigr)^\theta
 d_\Delta(u,v)^\theta.
\]
Choose $\theta$ so that~\eqref{eq:time-absorb} costs at most $N^{\eps/4}$, then choose $p_0>\max\{p,4/\theta\}$ and
\[
 r\simeq p_0+\log(2+NQM).
\]
The factors $N^{d/r},Q^{d/r},M^{d/r}$ are uniformly bounded and $r\lesssim_{p,\eps}(NQM)^{\eps/4}$.  The finite-rank approximants and the Schatten limits take values in the separable compact-operator space $\cK(L^2_q,L^2_n)$, so the chaining lemma is applied first in $\cK(L^2_q,L^2_n)$ and then read as an operator-norm bound in $\cL(L^2_q,L^2_n)$.  Apply Lemma~\ref{lem:time-lift}.  Since $Q,M\lesssim N$, the mixed profile satisfies
\[
 Q^{d/2}M^{d/2}
 \lesssim N^{d/2}\bigl(Q^{d/2}+M^{d/2}\bigr).
\]
After renaming the small loss, this proves~\eqref{eq:block-bound}.

Under the Galerkin-state condition, the family of fixed-block random operators has at most $L_{\rm cut}$ distinct members.  More explicitly, if these members are $Y_1,\ldots,Y_L$ with $L\le L_{\rm cut}$, then
\[
 \left\|\sup_\Lambda\norm{B^\circ_{\Lambda,N,Q,M}}\right\|_{L^p}
 \le \left(\sum_{a=1}^{L}\norm{Y_a}_{L^p}^p\right)^{1/p}
 \le L_{\rm cut}^{1/p}\max_a\norm{Y_a}_{L^p},
\]
which gives the maximal estimate.  Independently of this finite-state hypothesis, the assumed $C^\theta$ convergence of amplitudes and synthesis maps, applied to the difference family, proves~\eqref{eq:fixed-block-conv}.
\end{proof}

\subsection{Dyadic assembly and $L^p$ operator convergence}

For $\sigma\in\R$, define the source sequence space
\begin{equation}\label{eq:Xspace}
 \norm{w}_{\Xspace_T^\sigma}
 :=\sup_{0\le t\le T}
 \left(\sum_{Q\in\Dyd}\norm{w_Q(t)}_{L^2}^2\right)^{1/2}
 +\sup_{Q\in\Dyd}Q^\sigma\norm{w_Q}_{L_T^\infty L^2}.
\end{equation}
For output exponents $\beta,\zeta\in\R$, define
\begin{equation}\label{eq:Yspace}
 \norm{u}_{\Yspace_T^{\beta,\zeta}}
 :=\sup_{0\le t\le T}
 \left(\sum_M M^{2\beta}\norm{u_M(t)}_{L^2}^2\right)^{1/2}
 +\int_0^T\sup_M M^\zeta\norm{u_M(t)}_{L^2}\dd t.
\end{equation}
Under standard finite-overlap Littlewood--Paley decompositions these are equivalent to
\[
 \Xspace_T^\sigma=C_TL^2\cap L_T^\infty B^\sigma_{2,\infty},
 \qquad
 \Yspace_T^{\beta,\zeta}=C_TH^\beta\cap L_T^1B^\zeta_{2,\infty}.
\]

Set
\begin{equation}\label{eq:a-eps}
 a^{(\eps)}_{N,Q,M}
 :=\mathcal G(N)N^{d/2+\eps}
 \bigl(Q^{d/2+\eps}+M^{d/2+\eps}\bigr).
\end{equation}
Define the deterministic Sobolev and Besov majorants
\begin{align}
 \mathfrak S_H^{(\eps)}(\beta,\sigma)
 &:=\left(\sum_M\left[
 M^\beta\sum_{\substack{N:\,M\le C_{\rm out}N}}
 \sum_{\substack{Q:\,Q\le c_{\rm ap}N}}
 a^{(\eps)}_{N,Q,M}Q^{-\sigma}
 \right]^2\right)^{1/2},
 \label{eq:SH}\\
 \mathfrak S_B^{(\eps)}(\zeta,\sigma)
 &:=\sum_M M^\zeta
 \sum_{\substack{N:\,M\le C_{\rm out}N}}
 \sum_{\substack{Q:\,Q\le c_{\rm ap}N}}
 a^{(\eps)}_{N,Q,M}Q^{-\sigma}.
 \label{eq:SB}
\end{align}

For a cutoff $\Lambda$, define the centered Volterra operator blockwise by
\begin{equation}\label{eq:assembled}
 (\cB^\circ_\Lambda w)_M(t)
 :=\sum_{N,Q}\int_0^t
 B^\circ_{\Lambda,N,Q,M}(t,s)w_Q(s)\dd s,
\end{equation}
with the sums restricted by~\eqref{eq:dyadic-set}.

\begin{lemma}[Finite-shell measurability and $L^p$ tails]
\label{lem:measurable-assembly}
Assume the majorant condition~\eqref{eq:majorants-finite}.  For each finite set $\mathcal F\subset\Triads$, let $\cB^\circ_{\Lambda,\mathcal F}$ be the operator obtained from~\eqref{eq:assembled} by retaining only triples in $\mathcal F$.  Then $\cB^\circ_{\Lambda,\mathcal F}$ is a strongly measurable finite-shell operator from $\Xspace_T^\sigma$ to $\Yspace_T^{\beta,\zeta}$.  For every fixed $p\ge2$ and every exhausting sequence of finite sets $\mathcal F_m\nearrow\Triads$, the sequence $\cB^\circ_{\Lambda,\mathcal F_m}$ is Cauchy in
\[
 L^p\bigl(\Omega;\cL(\Xspace_T^\sigma,\Yspace_T^{\beta,\zeta})\bigr).
\]
Its limit defines the full operator $\cB^\circ_\Lambda$, and the random variable
$\norm{\cB^\circ_\Lambda}_{\Xspace_T^\sigma\to\Yspace_T^{\beta,\zeta}}$ is measurable.
\end{lemma}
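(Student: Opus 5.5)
The plan is to reduce everything to the fixed-block bound of Theorem~\ref{thm:block}, applied blockwise and summed against the source and target norms. The majorant condition~\eqref{eq:majorants-finite} will be read as finiteness of $\mathfrak S_H^{(\eps)}(\beta,\sigma)$ and $\mathfrak S_B^{(\eps)}(\zeta,\sigma)$ for some $\eps>0$.

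\textbf{Measurability of finite shells.} For a single triple $(N,Q,M)$, Theorem~\ref{thm:block} shows that $B^\circ_{\Lambda,N,Q,M}$ is a strongly measurable random element of $C(\Delta_T;\cK(L^2_q,L^2_n))$. Indeed, it is an $L^p$ limit of finite-rank chaoses, which are polynomial in finitely many Gaussians, and the target space is separable. The map
\[
 Y\longmapsto\Bigl(w\mapsto\int_0^t Y(t,s)w_Q(s)\dd s\Bigr)
\]
is bounded and linear from $C(\Delta_T;\cK)$ into $\cL(\Xspace_T^\sigma,\Yspace_T^{\beta,\zeta})$. It is continuous, so the composition is strongly measurable. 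A finite sum over $\mathcal F$ keeps this property. I also need the deterministic blockwise estimate
\[
 \norm{(\cB^\circ_{\Lambda,\{(N,Q,M)\}}w)_M(t)}_{L^2}
 \le T\norm{B^\circ_{\Lambda,N,Q,M}}_{C(\Delta_T;\cL)}\,Q^{-\sigma}\norm{w}_{\Xspace_T^\sigma}.
\]
It follows from $\norm{w_Q}_{L^\infty_TL^2}\le Q^{-\sigma}\norm{w}_{\Xspace_T^\sigma}$, and the output is continuous in $t$.

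\textbf{Cauchy property.} For finite $\mathcal F\subset\mathcal F'$, the difference $D=\cB^\circ_{\Lambda,\mathcal F'}-\cB^\circ_{\Lambda,\mathcal F}$ is the retained sum over $\mathcal F'\setminus\mathcal F$. Write $b_{NQM}=\norm{B^\circ_{\Lambda,N,Q,M}}_{C(\Delta_T;\cL)}$.

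For the $C_TH^\beta$ part, use the triangle inequality in $\ell^2_M$ and then Minkowski in $L^p(\Omega)$; this is allowed since $p\ge2$ and $L^p(\Omega;\ell^2)$ is a normed space. This bounds $\norm{\norm{D}}_{L^p}$ by
\[
 T\Bigl(\sum_M\Bigl[M^\beta\sum_{(N,Q)}\norm{b_{NQM}}_{L^p}Q^{-\sigma}\mathbf 1_{(N,Q,M)\notin\mathcal F}\Bigr]^2\Bigr)^{1/2}.
\]
The Besov part is handled the same way, bounding $\sup_M$ by $\sum_M$. By~\eqref{eq:block-bound}, $\norm{b_{NQM}}_{L^p}\lesssim a^{(\eps)}_{N,Q,M}$. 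The tails are therefore tails of the convergent series defining $\mathfrak S_H^{(\eps)}$ and $\mathfrak S_B^{(\eps)}$, and they tend to zero as $\mathcal F_m\nearrow\Triads$ by dominated convergence in the $M$-sum.

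The step needing care is exactly this pushing of the $L^p(\Omega)$ norm inside the operator norm. The point is to bound the operator norm pathwise by a deterministic linear combination of the $b_{NQM}$ \emph{before} taking expectations; then Minkowski applies without any interchange of $\sup_w$ and expectation.

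\textbf{Limit and measurability of the norm.} Completeness of $L^p(\Omega;\cL(\Xspace,\Yspace))$ requires some care, since $\cL$ is nonseparable. I would work in the closed separable subspace spanned by the ranges of the finite-shell operators, which are strongly measurable. Bochner $L^p$ into this closed subspace is complete, so the Cauchy sequence has a limit. The limit does not depend on the exhaustion: interlace two exhaustions and the combined sequence is still Cauchy.

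A subsequence converges almost surely in operator norm. The norm $\norm{\cB^\circ_\Lambda}$ is then an a.s.\ limit of measurable random variables $\norm{\cB^\circ_{\Lambda,\mathcal F_m}}$, each a continuous function of a strongly measurable element, and is therefore measurable.
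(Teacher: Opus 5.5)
Your proposal is correct and follows the paper's route. The steps match: blockwise measurability in the separable space $C(\Delta_T;\cK)$, then a pathwise bound of the restricted operator norm by the block norms $b_{NQM}$ followed by the $p\ge2$ Minkowski interchange, which is what the paper means by repeating the proof of Theorem~\ref{thm:Lp-assembly} with restricted majorants. The tails of the convergent majorant series then vanish, and measurability of the limit norm comes from an almost surely convergent subsequence. You also make explicit a few points the paper leaves implicit (completeness, independence of the exhaustion); for the interlacing step, apply your nested-set tail bound to the symmetric difference $\mathcal F\triangle\mathcal G$ rather than to $\mathcal F'\setminus\mathcal F$.
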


\begin{proof}
For finite $\mathcal F$, the operator is a finite sum of continuous-time compact-operator-valued blocks.  Each block is obtained as the operator-norm limit of finite-rank compressions in the separable spaces used in Theorem~\ref{thm:block}; hence the finite-shell operator and its norm are measurable.

Let $\mathcal R_{\Lambda,\mathcal E}$ denote the operator formed by summing only over a finite or cofinite set of triples $\mathcal E\subset\Triads$.  Repeating the proof of Theorem~\ref{thm:Lp-assembly} with the sums restricted to $\mathcal E$ gives, for every finite $p\ge2$,
\[
 \left\|\norm{\mathcal R_{\Lambda,\mathcal E}}_{\Xspace_T^\sigma\to\Yspace_T^{\beta,\zeta}}
 \right\|_{L^p(\Omega)}
 \lesssim_{p,T,\eps}
 \mathfrak S_{H,\mathcal E}^{(\eps)}(\beta,\sigma)
 +\mathfrak S_{B,\mathcal E}^{(\eps)}(\zeta,\sigma),
\]
where the two majorants are defined as in~\eqref{eq:SH}--\eqref{eq:SB}, but with the dyadic sums restricted to $\mathcal E$.  Since the full majorants are finite, their tails tend to zero as $\mathcal F\nearrow\Triads$.  Therefore the finite-shell operators form a Cauchy sequence in the displayed $L^p$ space.  The limit is the full operator by definition of the dyadic series in operator norm in $L^p$.  The operator norm of the limit is the $L^p$ limit, along a subsequence if necessary, of measurable finite-shell norm variables; hence it is measurable.  This lemma only supplies the $L^p$ construction and measurability.  The pathwise construction is proved later under the additional blockwise Borel--Cantelli bound of Lemma~\ref{lem:BC}.
\end{proof}

\begin{theorem}[$L^p$ centered operator]
\label{thm:Lp-assembly}
Fix $p\ge2$ and $\eps>0$.  If
\begin{equation}\label{eq:majorants-finite}
 \mathfrak S_H^{(\eps)}(\beta,\sigma)
 +\mathfrak S_B^{(\eps)}(\zeta,\sigma)<\infty,
\end{equation}
then $\cB^\circ_\Lambda$ is a well-defined random bounded operator
\[
 \cB^\circ_\Lambda:\Xspace_T^\sigma\to\Yspace_T^{\beta,\zeta},
\]
uniformly in $\Lambda$, and
\begin{equation}\label{eq:Lp-op}
 \left\|\norm{\cB^\circ_\Lambda}_{\Xspace_T^\sigma\to\Yspace_T^{\beta,\zeta}}
 \right\|_{L^p(\Omega)}
 \lesssim_{p,T,\eps}
 \mathfrak S_H^{(\eps)}(\beta,\sigma)
 +\mathfrak S_B^{(\eps)}(\zeta,\sigma).
\end{equation}
Moreover, as $\Lambda\to\infty$,
\begin{equation}\label{eq:Lp-conv}
 \cB^\circ_\Lambda\to\cB^\circ
 \quad\text{in }L^p\bigl(\Omega;\cL(\Xspace_T^\sigma,\Yspace_T^{\beta,\zeta})\bigr).
\end{equation}
\end{theorem}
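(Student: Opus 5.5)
The plan is to prove the operator bound~\eqref{eq:Lp-op} for finite-shell operators $\cB^\circ_{\Lambda,\mathcal F}$ with a constant independent of $\mathcal F$ and $\Lambda$. Lemma~\ref{lem:measurable-assembly} then passes to the limit, since its proof only uses this restricted estimate. Fix $w\in\Xspace_T^\sigma$ with $\norm{w}_{\Xspace_T^\sigma}\le1$, so that $\norm{w_Q}_{L^\infty_TL^2}\le Q^{-\sigma}$. For each triple set
\[
 \mathcal N_{N,Q,M}:=\norm{B^\circ_{\Lambda,N,Q,M}}_{C(\Delta_T;\cL(L^2_q,L^2_n))}.
\]
Minkowski in $s$ then gives, pathwise and uniformly in $t\le T$,
\[
 \norm{(\cB^\circ_{\Lambda,\mathcal F}w)_M(t)}_{L^2}\le T\sum_{(N,Q,M)\in\mathcal F}\mathcal N_{N,Q,M}Q^{-\sigma}.
\]
This deterministic reduction bounds the operator norm by
\[
 \Bigl(\sum_M M^{2\beta}\Bigl[\sum_{N,Q}\mathcal N_{N,Q,M}Q^{-\sigma}\Bigr]^2\Bigr)^{1/2}+T\sum_M M^\zeta\sum_{N,Q}\mathcal N_{N,Q,M}Q^{-\sigma}.
\]
For the Besov part I use $\int_0^T\sup_M\le T\sum_M$. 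This step is crude, but it matches the definition of $\mathfrak S_B$ exactly.

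Next I take $L^p(\Omega)$ norms. The Besov term is a nonnegative series, so Minkowski in $L^p$ bounds it by
\[
 \sum_M M^\zeta\sum_{N,Q}\norm{\mathcal N_{N,Q,M}}_{L^p}Q^{-\sigma}.
\]
For the Sobolev term, $L^p(\Omega;\ell^2_M)$ is a Banach lattice norm with $p\ge2$. Minkowski (for $p\ge2$, the norm of $\ell^2$ inside $L^p$ is at most the $\ell^2$ norm of the $L^p$ norms) together with the triangle inequality inside the bracket bounds it by
\[
 \Bigl(\sum_M M^{2\beta}\bigl[\sum_{N,Q}\norm{\mathcal N_{N,Q,M}}_{L^p}Q^{-\sigma}\bigr]^2\Bigr)^{1/2}.
\]
Theorem~\ref{thm:block} gives $\norm{\mathcal N_{N,Q,M}}_{L^p}\lesssim_{p,T,\eps}a^{(\eps)}_{N,Q,M}$ uniformly in $\Lambda$. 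Here I use the first form of~\eqref{eq:block-bound}, with the supremum over $\Lambda$ outside, which needs no finite-state hypothesis. This yields~\eqref{eq:Lp-op} with the majorants $\mathfrak S_H^{(\eps)}$ and $\mathfrak S_B^{(\eps)}$. The same computation over a restricted set $\mathcal E$ is exactly the input Lemma~\ref{lem:measurable-assembly} requires. The two statements are mutually consistent, not circular, because the restricted estimate is derived directly as above.

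For the convergence~\eqref{eq:Lp-conv}, fix $\delta>0$. Choose a finite $\mathcal F\subset\Triads$ whose complementary majorants, computed with $\eps$, are below $\delta$. Apply the restricted bound to the tail operators $\mathcal R_{\Lambda,\Triads\setminus\mathcal F}$ and $\mathcal R_{\infty,\Triads\setminus\mathcal F}$ uniformly in $\Lambda$. On the finitely many triples in $\mathcal F$, the same deterministic reduction applied to the differences $B^\circ_{\Lambda,N,Q,M}-B^\circ_{\infty,N,Q,M}$ bounds the $L^p$ operator-norm difference by a finite sum of weights times
\[
 \norm{B^\circ_{\Lambda,N,Q,M}-B^\circ_{\infty,N,Q,M}}_{L^p(\Omega;C(\Delta_T;\cL))}.
\]
Each of these tends to zero by~\eqref{eq:fixed-block-conv}. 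A $3\delta$ argument concludes.

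The main obstacle I expect is mild and is about ordering. The tail estimate must be available before the full operator is defined, and measurability of $\sup_t$ and the operator norm must hold on the finite shells. The first point is handled by the restricted estimate, derived independently as above. The second comes from the separable compact-operator setting in Theorem~\ref{thm:block}: I take the supremum over rational $t$ and use continuity of the blocks.
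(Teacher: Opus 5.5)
Your proposal is correct and follows the paper's proof. It uses the blockwise Volterra/Minkowski reduction with the $\Lambda$-uniform bound of Theorem~\ref{thm:block}, the $\ell^2_M$ and $\ell^1_M$ summations producing $\mathfrak S_H^{(\eps)}$ and $\mathfrak S_B^{(\eps)}$, and a finite-set/tail argument with \eqref{eq:fixed-block-conv} for cutoff convergence. You are somewhat more explicit than the paper on three points: you bound the operator norm pathwise before taking $L^p(\Omega)$, you use Minkowski in $L^{p/2}$ for the $\ell^2_M$ term (which is where $p\ge2$ is needed), and you note that the restricted estimate feeding Lemma~\ref{lem:measurable-assembly} is derived independently, so there is no circularity.
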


\begin{proof}
For fixed $M,N,Q$, Theorem~\ref{thm:block} and Minkowski's inequality in the Volterra variable give
\[
 \begin{aligned}
 \left\|\norm{\int_0^t B^\circ_{\Lambda,N,Q,M}(t,s)w_Q(s)\dd s}_{C_TL^2}
 \right\|_{L^p}
 \lesssim_T a^{(\eps)}_{N,Q,M}\norm{w_Q}_{L_T^\infty L^2}.
 \end{aligned}
\]
The same bound holds in $L_T^1L^2$.  Since
\[
 \norm{w_Q}_{L_T^\infty L^2}
 \le Q^{-\sigma}\norm{w}_{\Xspace_T^\sigma},
\]
absolute summation in $(N,Q)$, followed by the $\ell^2_M$ norm of the shellwise $C_TL^2$ bounds and the $\ell^\infty_M$ Besov norm, gives~\eqref{eq:Lp-op}.  This is legitimate because
\[
 \sup_t\left(\sum_M M^{2\beta}\norm{u_M(t)}_2^2\right)^{1/2}
 \le
 \left(\sum_M M^{2\beta}\norm{u_M}_{C_TL^2}^2\right)^{1/2}.
\]
The $\ell^1_M$ majorant~\eqref{eq:SB} supplies the high-output tail required for convergence in $B^\zeta_{2,\infty}$; explicitly, $\int_0^T\sup_M M^\zeta\norm{u_M(t)}_2\dd t\le \sum_M M^\zeta\norm{u_M}_{L_T^1L^2}$.

For cutoff convergence, fix a finite dyadic set $\mathcal F$.  On $\mathcal F$, the block convergence~\eqref{eq:fixed-block-conv} and the finite sum give convergence in $L^p$ operator norm.  On the complement $\Triads\setminus\mathcal F$, the block estimate~\eqref{eq:block-bound} applied to the difference of two cutoff families gives
\[
 \left\|\norm{(\cB^\circ_\Lambda-\cB^\circ_{\Lambda'})_{\Triads\setminus\mathcal F}}
 _{\Xspace_T^\sigma\to\Yspace_T^{\beta,\zeta}}
 \right\|_{L^p}
 \lesssim
 \mathfrak S^{(\eps)}_{H,\Triads\setminus\mathcal F}
 +\mathfrak S^{(\eps)}_{B,\Triads\setminus\mathcal F}.
\]
The right-hand side tends to zero as $\mathcal F\nearrow\Triads$.  This finite-set/tail argument proves that $\cB^\circ_\Lambda$ is Cauchy in $L^p$ operator norm and identifies its limit with the operator built from the limiting blocks.  Hence~\eqref{eq:Lp-conv} follows.

Measurability follows from Lemma~\ref{lem:measurable-assembly}.
\end{proof}

\subsection{Borel--Cantelli and full-sequence pathwise convergence}

The almost-sure statement requires more than $L^p$ convergence: it requires one probability-one event on which all dyadic blocks are simultaneously controlled.

\begin{lemma}[Dyadic first-Borel--Cantelli device]
\label{lem:dyadic-BC-device}
Let $\mathcal I\subset\Dyd^3$ be countable.  Let $X_i\ge0$ be random variables and $A_i>0$ deterministic weights.  Suppose that for every finite $p\ge2$,
\[
 \norm{X_i}_{L^p(\Omega)}\le C_pA_i,
 \qquad i\in\mathcal I.
\]
Let $L_i\ge1$ be deterministic losses such that $\sum_{i\in\mathcal I}L_i^{-p}<\infty$ for some $p$.  Then there is a full-probability event $\Omega_*$ and a finite random variable $C_*(\omega)$ such that
\[
 X_i(\omega)\le C_*(\omega)L_iA_i,
 \qquad i\in\mathcal I,
 \quad \omega\in\Omega_*.
\]
No independence assumption on the family $(X_i)_{i\in\mathcal I}$ is required.
\end{lemma}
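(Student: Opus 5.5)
The plan is the standard Markov plus first Borel--Cantelli argument. Fix the exponent $p$ (at least $2$) for which $\sum_{i\in\mathcal I}L_i^{-p}<\infty$; this is the only place where the freedom in $p$ is used. For each $i\in\mathcal I$ consider the event
\[
 E_i:=\{X_i>L_iA_i\}.
\]
Markov's inequality applied to $X_i^p$, together with the hypothesis $\norm{X_i}_{L^p}\le C_pA_i$, gives
\[
 \mathbb P(E_i)\le\frac{\E[X_i^p]}{L_i^pA_i^p}\le C_p^pL_i^{-p}.
\]
Hence $\sum_i\mathbb P(E_i)\le C_p^p\sum_iL_i^{-p}<\infty$. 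Since $\mathcal I$ is countable, the first Borel--Cantelli lemma applies after enumerating $\mathcal I$. It yields a full-probability event $\Omega_*:=\Omega\setminus\limsup_iE_i$ on which only finitely many $E_i$ occur. Only the union bound on tails is used here, so no independence is needed, which is exactly the remark at the end of the statement.

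On $\Omega_*$, let $\mathcal I_\omega$ denote the finite random set of indices with $\omega\in E_i$. I will define
\[
 C_*(\omega):=\max\Bigl\{1,\ \max_{i\in\mathcal I_\omega}\frac{X_i(\omega)}{L_iA_i}\Bigr\}.
\]
I also remove the null set where some $X_i=\infty$; this set is null because each $X_i\in L^p$. Then $C_*$ is finite as a maximum of finitely many finite numbers. For $i\notin\mathcal I_\omega$ we have $X_i\le L_iA_i\le C_*L_iA_i$, and for $i\in\mathcal I_\omega$ the bound holds by the choice of $C_*$.

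Measurability of $C_*$ follows by writing it as
\[
 C_*=\max\Bigl\{1,\ \sup_{i\in\mathcal I}\frac{X_i}{L_iA_i}\Bigr\}
\]
on $\Omega_*$. This is a countable supremum of measurable functions, and it coincides with the finite maximum above. There is no real obstacle. The only points needing care are the countability of $\mathcal I$, which is needed for Borel--Cantelli and for measurability of the supremum, and that $A_i>0$ and $L_i\ge1$ make the ratios well defined.
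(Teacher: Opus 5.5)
Your proposal is correct and follows the paper's argument: Markov's inequality on $\{X_i>L_iA_i\}$ (the paper's $R=1$ case), the first Borel--Cantelli lemma via summability of $L_i^{-p}$, and absorbing the finitely many exceptional ratios into $C_*(\omega)$. Your remarks on the measurability of $C_*$, the null set where some $X_i=\infty$, and taking $p\ge2$ (legitimate since $L_i\ge1$ makes the sum monotone in $p$) spell out details the paper leaves implicit.
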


\begin{proof}
For $R>0$ define $E_i(R)=\{X_i>RL_iA_i\}$.  Chebyshev's inequality gives
$\mathbb P(E_i(R))\le C_p^pR^{-p}L_i^{-p}$, hence $\sum_i\mathbb P(E_i(R))<\infty$.  By the first Borel--Cantelli lemma, for each fixed $R$ only finitely many $E_i(R)$ occur almost surely.  Taking $R=1$ and absorbing the finitely many exceptional ratios $X_i/(L_iA_i)$ into $C_*(\omega)$ proves the claim.  The proof used only summability of probabilities.
\end{proof}

\begin{lemma}[Simultaneous pathwise block bound]
\label{lem:BC}
Assume, in addition to Assumption~\ref{ass:time}, the Galerkin-state finite-state condition stated there.  Fix a final loss $\eps>0$.  Choose $0<\eps_0<\eps$ and $\rho>0$ so that
\begin{equation}\label{eq:loss-hierarchy}
 \eps_0+3\rho<\eps.
\end{equation}
Then there is a full-probability event $\Omega_*$ and a finite random variable $C_*(\omega)$ such that, for every $\omega\in\Omega_*$, every dyadic triple in $\Triads$, and every cutoff,
\begin{equation}\label{eq:path-block}
 \norm{B^\circ_{\Lambda,N,Q,M}(\omega)}_{C(\Delta_T;\cL(L^2_q,L^2_n))}
 \le C_*(\omega)a^{(\eps)}_{N,Q,M}.
\end{equation}
No independence between distinct dyadic blocks is required.
\end{lemma}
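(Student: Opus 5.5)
The plan is to apply the dyadic first Borel--Cantelli device, Lemma~\ref{lem:dyadic-BC-device}, with index set $\mathcal I=\Triads$ and random variables
\[
 X_{N,Q,M}:=\sup_{\Lambda\in\Dyd\cup\{\infty\}}
 \norm{B^\circ_{\Lambda,N,Q,M}}_{C(\Delta_T;\cL(L^2_q,L^2_n))}.
\]
Measurability of $X_{N,Q,M}$ is not an issue. Under the Galerkin-state condition the supremum is a maximum over at most $L_{\rm cut}$ measurable random variables $Y_1,\dots,Y_L$. Each of these is measurable because the blocks take values in the separable space $C(\Delta_T;\cK(L^2_q,L^2_n))$, as in Theorem~\ref{thm:block}.

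For the weights, take $A_{N,Q,M}:=a^{(\eps_0)}_{N,Q,M}$. The maximal form of Theorem~\ref{thm:block}, with $\eps_0$ as its loss, gives for every finite $p\ge2$
\[
 \norm{X_{N,Q,M}}_{L^p(\Omega)}\le C_p\,a^{(\eps_0)}_{N,Q,M}.
\]
Here $C_p$ depends on $p,T,\eps_0,L_{\rm cut}$ and the fixed constants, but not on the triple. This is exactly the moment hypothesis of the device, and it holds for all $p$ simultaneously.

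For the losses, take $L_{N,Q,M}:=(NQM)^{\rho}\ge1$. Then
\[
 \sum_{(N,Q,M)\in\Triads}L^{-p}_{N,Q,M}\le\Bigl(\sum_{N\in\Dyd}N^{-\rho p}\Bigr)^3<\infty
\]
for any $p>0$, since the dyadic geometric series converges. In particular it is finite for some $p\ge2$. The device then yields a full-probability event $\Omega_*$ and a finite $C_*(\omega)$ with
\[
 X_{N,Q,M}(\omega)\le C_*(\omega)(NQM)^\rho a^{(\eps_0)}_{N,Q,M}
\]
simultaneously for all triples. No independence between blocks is used: the device relies only on Chebyshev's inequality and the first Borel--Cantelli lemma.

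It remains to absorb the loss into $a^{(\eps)}$. On $\Triads$ we have $Q\le c_{\rm ap}N$ and $M\le C_{\rm out}N$, so $(NQM)^\rho\lesssim N^{3\rho}$. By~\eqref{eq:loss-hierarchy},
\[
 N^{3\rho}\,N^{d/2+\eps_0}\le N^{d/2+\eps}.
\]
On the other factor, since $Q,M\ge1$ and $\eps_0<\eps$,
\[
 Q^{d/2+\eps_0}+M^{d/2+\eps_0}\le Q^{d/2+\eps}+M^{d/2+\eps}.
\]
Hence $(NQM)^\rho a^{(\eps_0)}_{N,Q,M}\lesssim a^{(\eps)}_{N,Q,M}$, and after enlarging $C_*$ by this deterministic constant we obtain~\eqref{eq:path-block} for all triples and all cutoffs on $\Omega_*$.

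The main point needing care is the uniformity in $\Lambda$. One has to use the version of Theorem~\ref{thm:block} with the supremum over $\Lambda$ inside the $L^p$ norm, which is available only under the finite-state hypothesis. This is what lets a single event $\Omega_*$ work for every cutoff, rather than one event per cutoff. A countable union of such events would also give full probability, but it would lose the uniform random constant $C_*$.
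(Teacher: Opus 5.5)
Your proposal is correct and follows the paper's proof essentially step for step. You use the maximal (supremum-inside) form of Theorem~\ref{thm:block} with loss $\eps_0$, the Borel--Cantelli device of Lemma~\ref{lem:dyadic-BC-device} with losses $(NQM)^\rho$, and absorption of the loss via $Q,M\lesssim N$ and \eqref{eq:loss-hierarchy}. Your additional remarks on the measurability of $X_{N,Q,M}$, and on the dyadic loss series being summable for every $p>0$ (the paper simply takes $p\rho>1$), are harmless refinements.
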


\begin{proof}
Let
\[
 X_{N,Q,M}
 :=\sup_{\Lambda\in\Dyd\cup\{\infty\}}
 \norm{B^\circ_{\Lambda,N,Q,M}}_{C(\Delta_T;\cL)}.
\]
The maximal estimate in Theorem~\ref{thm:block}, run with loss $\eps_0$, gives for every finite $p$,
\[
 \norm{X_{N,Q,M}}_{L^p}
 \lesssim_p a^{(\eps_0)}_{N,Q,M}.
\]
Apply Lemma~\ref{lem:dyadic-BC-device} with the deterministic loss
$L_{N,Q,M}=N^\rho Q^\rho M^\rho$.  Choosing $p$ so that $p\rho>1$ makes
\[
 \sum_{(N,Q,M)\in\Triads}N^{-p\rho}Q^{-p\rho}M^{-p\rho}<\infty,
\]
because the variables are dyadic and start at scale one.  Hence, on one full-probability event,
\[
 X_{N,Q,M}(\omega)
 \le C_*(\omega)N^\rho Q^\rho M^\rho a^{(\eps_0)}_{N,Q,M}
\]
for all admissible triples.  Since $Q,M\lesssim N$,
\[
 N^\rho Q^\rho M^\rho\lesssim N^{3\rho},
\]
and~\eqref{eq:loss-hierarchy} absorbs this dyadic loss into the final $\eps$-majorant.  This proves~\eqref{eq:path-block}.
\end{proof}

\begin{theorem}[Pathwise centered operator and full cutoff convergence]
\label{thm:pathwise}
Assume the Galerkin-state finite-state condition in Assumption~\ref{ass:time} and
\[
 \mathfrak S_H^{(\eps)}(\beta,\sigma)
 +\mathfrak S_B^{(\eps)}(\zeta,\sigma)<\infty.
\]
Then on the full-probability event $\Omega_*$ of Lemma~\ref{lem:BC}, for every cutoff $\Lambda$ the series~\eqref{eq:assembled} defines a bounded operator
\[
 \cB^\circ_\Lambda(\omega):\Xspace_T^\sigma\to\Yspace_T^{\beta,\zeta},
\]
and
\begin{equation}\label{eq:path-uniform}
 \sup_{\Lambda\in\Dyd\cup\{\infty\}}
 \norm{\cB^\circ_\Lambda(\omega)}_{\Xspace_T^\sigma\to\Yspace_T^{\beta,\zeta}}
 \le C_*(\omega)
 \left[\mathfrak S_H^{(\eps)}+\mathfrak S_B^{(\eps)}\right].
\end{equation}
Moreover the entire Galerkin sequence, not merely a subsequence, satisfies
\begin{equation}\label{eq:path-conv}
 \cB^\circ_\Lambda(\omega)\to\cB^\circ(\omega)
 \quad\text{in }\cL(\Xspace_T^\sigma,\Yspace_T^{\beta,\zeta}).
\end{equation}
For every fixed input and output test vector, the limiting scalar pairing belongs to the centered second Wiener chaos.
\end{theorem}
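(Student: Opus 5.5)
On the event $\Omega_*$ of Lemma~\ref{lem:BC} the proof is entirely deterministic: we use the pathwise block bound~\eqref{eq:path-block}, which holds for all triples and all cutoffs simultaneously. We then repeat the summation of Theorem~\ref{thm:Lp-assembly} pointwise in $\omega$.

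\emph{Step 1: absolute convergence and the uniform bound.} Fix $\omega\in\Omega_*$, a cutoff $\Lambda$ and $w\in\Xspace_T^\sigma$. For each triple, \eqref{eq:path-block} and Minkowski in the Volterra variable give
\[
\norm{\int_0^t B^\circ_{\Lambda,N,Q,M}(t,s)w_Q(s)\dd s}_{C_TL^2}\le T\,C_*(\omega)\,a^{(\eps)}_{N,Q,M}Q^{-\sigma}\norm{w}_{\Xspace_T^\sigma},
\]
and the same bound holds in $L^1_TL^2$. For each $M$ the series in $(N,Q)$ converges absolutely in $C_TL^2$. Taking $\ell^2_M$ with weights $M^\beta$ bounds the Sobolev part of the output norm by $T C_*\mathfrak S_H^{(\eps)}\norm{w}$. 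For the Besov part we use $\int_0^T\sup_M M^\zeta\norm{u_M}\dd t\le\sum_M M^\zeta\norm{u_M}_{L^1_TL^2}$, which gives $T C_*\mathfrak S_B^{(\eps)}\norm{w}$. This shows that~\eqref{eq:assembled} defines a bounded operator and proves~\eqref{eq:path-uniform}. The bound is uniform in $\Lambda$ because $C_*$ is. Since the majorants are deterministic, the dyadic tails over $\Triads\setminus\mathcal F$ are bounded by $C_*(\omega)[\mathfrak S^{(\eps)}_{H,\Triads\setminus\mathcal F}+\mathfrak S^{(\eps)}_{B,\Triads\setminus\mathcal F}]$. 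These tails tend to zero uniformly in $\Lambda$.

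\emph{Step 2: full-sequence convergence.} This uses the finite-state Galerkin hypothesis. For a fixed triple, the family $\Lambda\mapsto(m_\Lambda,S_{1,\Lambda},S_{2,\Lambda})$ is eventually equal to the limiting state. The block $B^\circ_{\Lambda,N,Q,M}$ is a measurable function of the state, since it is built from the fixed isonormal processes through Theorem~\ref{thm:four-abstract}. So on a full-measure event (a countable intersection over triples and states, intersected with $\Omega_*$) we have $B^\circ_{\Lambda,N,Q,M}(\omega)=B^\circ_{\infty,N,Q,M}(\omega)$ for all $\Lambda\ge\Lambda_0(N,Q,M)$. Here $\Lambda_0$ is deterministic, because eventual equality is a deterministic statement about the states.

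\emph{Step 3: assembling the limit.} Given $\delta>0$, pick a finite $\mathcal F$ whose tail majorant is below $\delta/C_*(\omega)$. For $\Lambda\ge\max_{\mathcal F}\Lambda_0$ the $\mathcal F$-parts of $\cB^\circ_\Lambda$ and $\cB^\circ_\infty$ coincide exactly. The tails of both operators are each at most $\delta$. Hence $\norm{\cB^\circ_\Lambda(\omega)-\cB^\circ_\infty(\omega)}\le2\delta$, which proves~\eqref{eq:path-conv} along the whole dyadic sequence. The pathwise limit $\cB^\circ_\infty(\omega)$ must also agree a.s. with the $L^p$ limit of Theorem~\ref{thm:Lp-assembly}. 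On each finite $\mathcal F$ the two operators are built from the same blocks, and both tail contributions vanish, in $L^p$ and pathwise respectively. Identifying a subsequence limit then gives the agreement.

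\emph{Step 4: chaos membership.} Fix test vectors $f,g$. Each finite-shell pairing $\ip{\cB^\circ_{\infty,\mathcal F}f}{g}$ is a finite sum of time integrals of $I_{1,1}$ or $I_2$ values, by~\eqref{eq:dec-block}--\eqref{eq:wick-block}. The map $f,g\mapsto$ pairing is continuous into $L^2(\Omega)$ by Theorem~\ref{thm:four-abstract}, and the second chaos is a closed subspace of $L^2(\Omega)$. The Bochner integral in time therefore stays in the second chaos. By Theorem~\ref{thm:Lp-assembly} with $p=2$, the full pairing is the $L^2$ limit over $\mathcal F_m\nearrow\Triads$, so it lies in the second chaos as well.

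The main delicate point is Step 2. The pathwise bound alone gives only uniform control, not convergence. Exact eventual equality of blocks is what replaces the missing pathwise rate, and one must check that $\Lambda_0$ is deterministic and that only countably many null sets are discarded.
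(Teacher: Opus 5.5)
Your proposal is correct and follows the paper's own argument. On $\Omega_*$ you rerun the assembly of Theorem~\ref{thm:Lp-assembly} with the deterministic majorant~\eqref{eq:path-block}, get full-sequence convergence from exact eventual equality of the finitely many blocks in a finite set $\mathcal F$ plus the uniformly small pathwise tail, and get chaos membership from closedness of the second chaos in $L^2(\Omega)$; your extra bookkeeping (discarding countably many null sets to identify equal-state blocks, and identifying the pathwise limit with the $L^p$ limit) just makes explicit what the paper leaves implicit.
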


\begin{proof}
Fix $\omega\in\Omega_*$.  The probability argument is now finished.  Replace the random block norm by the deterministic pathwise majorant~\eqref{eq:path-block} and repeat the assembly proof of Theorem~\ref{thm:Lp-assembly}.  This gives~\eqref{eq:path-uniform} directly and defines the operator on the full nonseparable source space without a density argument.

For convergence, choose a finite set of dyadic triples so that the pathwise Sobolev and Besov tails are arbitrarily small.  Every block in the finite set is stable for all sufficiently large Galerkin cutoffs.  Hence two large cutoffs agree on the finite set and differ only by the uniformly small tail.  This proves the Cauchy property of the full sequence and hence~\eqref{eq:path-conv}.

Finite partial pairings are centered second chaoses.  Their $L^2(\Omega)$ limit remains in the closed second-chaos subspace, proving the final assertion.
\end{proof}

\subsection{Power-law window and final theorem}

We now impose the power-law envelope relevant for dispersive and paracontrolled applications.  The dyadic decay of the deterministic multiplier and the covariance envelopes is encoded in a single exponent $\Gamma$, and the Sobolev--Besov summability conditions are verified explicitly from this exponent.

\begin{proposition}[Power-law summability]
\label{prop:power-law}
Assume that, for some $C_G>0$ and $\Gamma\in\R$,
\begin{equation}\label{eq:power-envelope}
 \mathcal G(N)\le C_G N^{-\Gamma}\qquad (N\in\Dyd).
\end{equation}
If
\begin{equation}\label{eq:power-window-beta-zeta}
 \Gamma>\frac d2,\qquad
 \sigma>\max\{0,d-\Gamma\},\qquad
 \beta<\Gamma-d,
 \qquad
 \zeta<\Gamma-d,
\end{equation}
then one can choose $\eps>0$ so small that
\begin{equation}\label{eq:power-majorants}
 \mathfrak S_H^{(\eps)}(\beta,\sigma)
 +\mathfrak S_B^{(\eps)}(\zeta,\sigma)<\infty.
\end{equation}
\end{proposition}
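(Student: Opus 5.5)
The plan is to compute the inner dyadic sums in \eqref{eq:SH}--\eqref{eq:SB} explicitly: first sum over $Q$, then over $N$. This leaves a single output-shell profile of the form $M^{d-\Gamma+O(\eps)}$. Both majorants are then geometric series in $M$, and we choose $\eps$ small enough that the strict inequalities in \eqref{eq:power-window-beta-zeta} survive the $O(\eps)$ losses. Throughout, logarithmic factors coming from borderline geometric sums are bounded by $N^{\eps}$ (dyadic scales start at one), so they only cost a further multiple of $\eps$.

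\emph{Step 1: the $Q$-sum.} Insert \eqref{eq:power-envelope} into \eqref{eq:a-eps}. For fixed $(N,M)$,
\[
 \sum_{Q\le c_{\rm ap}N}a^{(\eps)}_{N,Q,M}Q^{-\sigma}
 \lesssim N^{-\Gamma+d/2+\eps}\Bigl(\sum_{Q\le N}Q^{d/2+\eps-\sigma}
 +M^{d/2+\eps}\sum_{Q}Q^{-\sigma}\Bigr).
\]
The second sum converges because $\sigma>0$. This is exactly where the lower bound $\sigma>0$ is used: without it there would be an uncontrolled growth in the $M$-branch. For the first sum there are two cases. If $\sigma<d/2+\eps$, it is $\lesssim N^{d/2+\eps-\sigma}$. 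If $\sigma\ge d/2+\eps$, it is $\lesssim \log(2+N)\lesssim N^{\eps}$. The $Q$-sum is therefore bounded by
\[
 N^{-\Gamma+d+2\eps-\sigma}+N^{-\Gamma+d/2+2\eps}+N^{-\Gamma+d/2+\eps}M^{d/2+\eps}.
\]

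\emph{Step 2: the $N$-sum over $N\ge M/C_{\rm out}$.} We need each exponent of $N$ to be negative for $\eps$ small.
\begin{itemize}[leftmargin=2.2em]
 \item For the first term this is $\sigma>d-\Gamma+2\eps$, which follows from $\sigma>d-\Gamma$.
 \item For the second and third terms it is $\Gamma>d/2+2\eps$, which follows from $\Gamma>d/2$.
\end{itemize}
Summing the geometric tails from $N\sim M$ gives the shell profile
\[
 \Phi(M):=M^{d-\Gamma-\sigma+2\eps}+M^{d/2-\Gamma+2\eps}+M^{d-\Gamma+2\eps}\lesssim M^{d-\Gamma+2\eps}.
\]
The last bound uses $\sigma>0$ and $d/2\le d$.

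\emph{Step 3: the output sums.}
\begin{itemize}[leftmargin=2.2em]
 \item Sobolev majorant: $\mathfrak S_H^{(\eps)}(\beta,\sigma)^2\lesssim\sum_M M^{2(\beta+d-\Gamma+2\eps)}$. This is finite once $2\eps<\Gamma-d-\beta$.
 \item Besov majorant: $\mathfrak S_B^{(\eps)}(\zeta,\sigma)\lesssim\sum_M M^{\zeta+d-\Gamma+2\eps}$. This is finite once $2\eps<\Gamma-d-\zeta$.
\end{itemize}
Taking $\eps$ smaller than a fixed fraction of the four positive gaps
\[
 \Gamma-\tfrac d2,\qquad \sigma-(d-\Gamma),\qquad \Gamma-d-\beta,\qquad \Gamma-d-\zeta
\]
(together with the log-absorption margin) gives \eqref{eq:power-majorants}.

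The main obstacle is bookkeeping in Step 1: keeping track of which branch of $Q^{d/2+\eps}+M^{d/2+\eps}$ dominates, and handling the case split $\sigma\gtrless d/2$ together with the borderline logarithms. I would handle this by uniformly dominating each logarithm by $N^{\eps}$ and then checking only the three exponents of $N$ listed in Step 2.
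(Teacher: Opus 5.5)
Your proposal is correct and follows the same route as the paper's proof. You sum first in $Q$, using $\sigma>0$ on the $M^{d/2+\eps}$ branch and the case split $\sigma\gtrless d/2$ on the $Q^{d/2+\eps}$ branch. You then sum over the tail $N\gtrsim M$ using $\sigma>d-\Gamma$ and $\Gamma>d/2$, and finally sum against the output weights $M^\beta$ and $M^\zeta$. The differences are cosmetic: you carry the $\eps$-losses explicitly rather than suppressing them, and you dominate all branches by the single shell profile $M^{d-\Gamma+2\eps}$ instead of checking each branch's output exponent separately.
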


\begin{proof}
We suppress an arbitrarily small positive loss, which is restored at the end by decreasing the strict margins in~\eqref{eq:power-window-beta-zeta}.  The $M^{d/2}$ branch of the block profile is
\[
 N^{d/2-\Gamma}M^{d/2}Q^{-\sigma}.
\]
Since $\sigma>0$, the dyadic $Q$-sum is bounded.  Summing over $N\gtrsim M$ gives $M^{d/2-\Gamma}$ because $\Gamma>d/2$.  After multiplication by the output weight, the resulting powers are
\[
 M^{\beta+d-\Gamma}
 \qquad\text{and}\qquad
 M^{\zeta+d-\Gamma},
\]
which are summable under $\beta<\Gamma-d$ and $\zeta<\Gamma-d$.

For the $Q^{d/2}$ branch, the relevant summand is
\[
 N^{d/2-\Gamma}Q^{d/2-\sigma}.
\]
If $\sigma<d/2$, the $Q$-sum contributes $N^{d/2-\sigma}$, and the subsequent $N$-tail is controlled by $M^{d-\Gamma-\sigma}$ because $d-\Gamma-\sigma<0$.  The output exponents are then $\beta+d-\Gamma-\sigma$ and $\zeta+d-\Gamma-\sigma$, which are negative under $\beta,\zeta<\Gamma-d$ and $\sigma>0$.  If $\sigma\ge d/2$, the $Q$-sum is bounded up to a harmless endpoint logarithm, absorbed by the small $\eps$, and the $N$-tail is $M^{d/2-\Gamma}$; this is again summable after multiplication by $M^\beta$ or $M^\zeta$, since $\beta,\zeta<\Gamma-d$ is stronger than $\beta,\zeta<\Gamma-d/2$.  Finally choose $\eps>0$ so small that all strict inequalities in~\eqref{eq:power-window-beta-zeta} remain strict after the harmless replacements forced by the factors $N^{C\eps}Q^{C\eps}M^{C\eps}$.  This proves~\eqref{eq:power-majorants}.
\end{proof}

\begin{corollary}[Centered paracontrolled window]
\label{cor:lambda-window}
Let $\lambda>0$ be a deterministic smoothing gain and write
\[
 \beta=s-\lambda,
 \qquad
 \zeta=\sigma-\lambda.
\]
Under the power-law envelope~\eqref{eq:power-envelope}, the hypotheses of Proposition~\ref{prop:power-law} follow from
\begin{equation}\label{eq:lambda-window}
 \Gamma>\frac d2,
 \qquad
 s<\lambda+\Gamma-d,
 \qquad
 \max\{0,d-\Gamma\}<\sigma<\lambda+\Gamma-d.
\end{equation}
\end{corollary}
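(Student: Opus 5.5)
The plan is a direct verification: I substitute $\beta=s-\lambda$ and $\zeta=\sigma-\lambda$ into the four hypotheses of \eqref{eq:power-window-beta-zeta} and check each against \eqref{eq:lambda-window}.

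The first two hypotheses transfer verbatim. The condition $\Gamma>d/2$ appears in both windows. The condition $\sigma>\max\{0,d-\Gamma\}$ is the left half of the $\sigma$-window in \eqref{eq:lambda-window}.

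For the output exponents, the bound $\beta<\Gamma-d$ reads $s-\lambda<\Gamma-d$, which is exactly $s<\lambda+\Gamma-d$. Likewise $\zeta<\Gamma-d$ reads $\sigma-\lambda<\Gamma-d$, which is the right half $\sigma<\lambda+\Gamma-d$ of the $\sigma$-window. All inequalities are strict on both sides, so nothing is lost at the endpoints. Proposition~\ref{prop:power-law} then supplies an $\eps>0$ with $\mathfrak S_H^{(\eps)}(s-\lambda,\sigma)+\mathfrak S_B^{(\eps)}(\sigma-\lambda,\sigma)<\infty$.

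I would also record that the window is nonempty, so the corollary is not vacuous. Nonemptiness requires $\max\{0,d-\Gamma\}<\lambda+\Gamma-d$. This holds because $\lambda>0$ gives $d-\Gamma<\lambda+\Gamma-d$ whenever $\Gamma>d-\lambda/2$, and in particular whenever $\Gamma\ge d$. In the remaining range $d/2<\Gamma<d$, nonemptiness is the extra condition $\lambda>2(d-\Gamma)$, which is implicit in assuming that some $\sigma$ exists in \eqref{eq:lambda-window}.

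There is no real obstacle here. The one point to be careful about is that the condition on $\zeta$ is what produces the upper bound on $\sigma$. This upper bound ties the Besov regularity of the input space $\Xspace_T^\sigma$ to the Besov regularity $\sigma-\lambda$ of the output space $\Yspace_T^{s-\lambda,\sigma-\lambda}$.
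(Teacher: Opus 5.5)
Your proposal is correct and matches the paper, which gives no separate proof because the corollary is exactly this substitution: $\beta=s-\lambda$ and $\zeta=\sigma-\lambda$ turn the four conditions of Proposition~\ref{prop:power-law} into the window \eqref{eq:lambda-window}, with the $\zeta$-condition supplying the upper bound on $\sigma$. Your nonemptiness remark, which reduces to $\Gamma>d-\lambda/2$, is equivalent to the paper's note that the input interval is nonempty exactly when $\lambda+\Gamma-d>\max\{0,d-\Gamma\}$.
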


Here $\lambda$ is only a target reparametrization: the smoothing contribution is already included either in $\Kdet$ and hence in $\Gamma$, or in the output exponents $\beta=s-\lambda$ and $\zeta=\sigma-\lambda$.  The input interval is nonempty exactly when
\[
 \lambda+\Gamma-d>\max\{0,d-\Gamma\}.
\]

\begin{theorem}[Localized continuous centered second-chaos operator]
\label{thm:final}
Assume the continuous Gaussian setup of Section~\ref{sec:setup}, the order-zero symbol bound~\eqref{eq:m-size}, covariance bounds~\eqref{eq:S-size}, and the time-regularity and cutoff-convergence assumptions in Assumption~\ref{ass:time}.  Assume also the power-law envelope~\eqref{eq:power-envelope}.  Let $\lambda>0$ and let $s,\sigma\in\R$ satisfy the strict window~\eqref{eq:lambda-window}.  Then the following hold.
\begin{enumerate}[label=\textup{(\roman*)},leftmargin=2.4em]
\item For every finite $p\ge2$, the localized centered Galerkin operators
\[
 \cB^\circ_\Lambda:
 \Xspace_T^\sigma
 \longrightarrow
 C_TH^{s-\lambda}\cap L_T^1B^{\sigma-\lambda}_{2,\infty}
\]
are uniformly bounded in $L^p(\Omega)$ and converge in $L^p$ operator norm.  More precisely, after choosing $\eps>0$ below the strict margins in~\eqref{eq:lambda-window},
\[
 \sup_{\Lambda\in\Dyd\cup\{\infty\}}
 \left\|
 \norm{\cB^\circ_\Lambda}_{\Xspace_T^\sigma\to C_TH^{s-\lambda}\cap L_T^1B^{\sigma-\lambda}_{2,\infty}}
 \right\|_{L^p(\Omega)}<\infty,
\]
and
\[
 \cB^\circ_\Lambda\to\cB^\circ
 \quad\text{in }L^p\bigl(\Omega;\cL(\Xspace_T^\sigma,
 C_TH^{s-\lambda}\cap L_T^1B^{\sigma-\lambda}_{2,\infty})\bigr).
\]
\item Under the finite-state Galerkin condition in Assumption~\ref{ass:time}, there is one full-probability event $\Omega_*$ on which the whole Galerkin sequence converges pathwise in the same operator norm.  The same event controls all dyadic blocks simultaneously.
\end{enumerate}
The result applies to independent Gaussian legs and to same-family Wick squares after complete Wick centering and decoupling.  The finite-state hypothesis is used only in item~\textup{(ii)}.
\end{theorem}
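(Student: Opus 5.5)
The plan is to assemble Theorem~\ref{thm:final} from results already in place, reducing everything to checking that the window~\eqref{eq:lambda-window} makes the two dyadic majorants finite.

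\textbf{Step 1: reparametrize.} Set $\beta=s-\lambda$ and $\zeta=\sigma-\lambda$ as in Corollary~\ref{cor:lambda-window}. The window gives
\[
\beta<\Gamma-d,\qquad \zeta<\Gamma-d,\qquad \sigma>\max\{0,d-\Gamma\},\qquad \Gamma>d/2,
\]
which are exactly the hypotheses~\eqref{eq:power-window-beta-zeta}. Proposition~\ref{prop:power-law} then supplies an $\eps>0$, chosen below all strict margins, such that
\[
\mathfrak S_H^{(\eps)}(\beta,\sigma)+\mathfrak S_B^{(\eps)}(\zeta,\sigma)<\infty.
\]

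\textbf{Step 2: the $L^p$ statement (item (i)).} I would apply Theorem~\ref{thm:Lp-assembly} with this $\eps$. Its block input is Theorem~\ref{thm:block}, which only uses~\eqref{eq:m-size}, \eqref{eq:S-size} and Assumption~\ref{ass:time}.
\begin{itemize}
\item The bound~\eqref{eq:Lp-op} gives the uniform-in-$\Lambda$ $L^p$ bound.
\item The bound~\eqref{eq:Lp-conv} gives $L^p$ operator-norm convergence.
\item Lemma~\ref{lem:measurable-assembly} handles measurability.
\end{itemize}
The conclusion is then read into the concrete target: $\Yspace_T^{\beta,\zeta}$ is equivalent to $C_TH^{s-\lambda}\cap L_T^1B^{\sigma-\lambda}_{2,\infty}$ under finite-overlap Littlewood--Paley decompositions, and $\Xspace_T^\sigma$ is by definition the dyadic model of the source.

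\textbf{Step 3: the pathwise statement (item (ii)).} Under the finite-state Galerkin condition, Lemma~\ref{lem:BC} produces one event $\Omega_*$ on which~\eqref{eq:path-block} holds simultaneously for all triples and all cutoffs. Theorem~\ref{thm:pathwise} then gives~\eqref{eq:path-uniform} and full-sequence convergence~\eqref{eq:path-conv}. I must choose the Borel--Cantelli losses $\eps_0+3\rho<\eps$ inside the same margin fixed in Step 1. Since the margins are strict, I would first take $\eps$ from Proposition~\ref{prop:power-law} and only then pick $\eps_0,\rho$.

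\textbf{Step 4: same-family legs.} For same-family Wick squares, Theorem~\ref{thm:four-abstract} already incorporates Lemma~\ref{lem:wick-decoupling}. I would check that symmetrization preserves the four flattening bounds: it swaps the roles of $F_1\leftrightarrow F_1$, $F_2\leftrightarrow F_2$ and $F_3\leftrightarrow F_4$, and Proposition~\ref{prop:Schatten} bounds these symmetric pairs identically. Then every step above goes through with an extra universal constant $C_{\rm dec}$.

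\textbf{Main obstacle.} The only genuinely delicate point is the endpoint bookkeeping in Proposition~\ref{prop:power-law} at $\sigma=d/2$, where a logarithm appears. It also requires that a single $\eps$ serve Theorem~\ref{thm:block}, the summability, and the Borel--Cantelli loss. I would handle this by shrinking $\eps$ once at the start, below half of the smallest margin in~\eqref{eq:lambda-window}.
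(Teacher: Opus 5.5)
Your proposal is correct and follows essentially the same route as the paper's proof: reparametrize with $\beta=s-\lambda$, $\zeta=\sigma-\lambda$; get finiteness of both majorants from the power-law summability proposition; then invoke the $L^p$ assembly theorem for (i) and the Borel--Cantelli block lemma together with the pathwise theorem for (ii). Your extra bookkeeping is consistent with what the paper already builds into its earlier results, namely fixing $\eps$ first and then choosing $\eps_0+3\rho<\eps$, and noting that symmetrization exchanges $F_3\leftrightarrow F_4$.
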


\begin{proof}
Set $\beta=s-\lambda$ and $\zeta=\sigma-\lambda$.  Corollary~\ref{cor:lambda-window} and Proposition~\ref{prop:power-law} give the dyadic summability condition~\eqref{eq:majorants-finite} for a sufficiently small $\eps>0$.  The $L^p$ bound and $L^p$ cutoff convergence then follow from Theorem~\ref{thm:Lp-assembly}.  Under the additional Galerkin-state finite-state condition, the pathwise bound and full-sequence pathwise convergence follow from Theorem~\ref{thm:pathwise}.
\end{proof}

\subsection{Localized Bony notation and paracontrolled interpretation}

We finally record the near-output paracontrolled form of the estimate.

Fix a physical cutoff $\varphi\in C_c^\infty(\R^d)$ and set $k=\widehat\varphi$.  We use the localized Bony operations
\[
 f\prec_{\rm loc}g:=\varphi\,(f\prec g),\qquad
 f\circ_{\rm loc}g:=\varphi\,(f\circ g).
\]
When the same physical multiplier is inserted at the dyadic block level, the Fourier side of each block contains the soft incidence factor
\[
 \widehat\varphi(n-q-\ell-\eta)=k(n-q-\ell-\eta).
\]
Thus $\prec_{\rm loc}$ and $\circ_{\rm loc}$ are the usual Bony low--high and resonant operations followed by a fixed physical localization~\cite{Bony,BCD,GIP}.  The corollary records only the near-output sector $Q\le c_{\rm ap}N$, $M\le C_{\rm out}N$.

\begin{corollary}[Application to a near-output Wick-centered paracontrolled block]
\label{cor:paracontrolled-app}
Assume the hypotheses of Theorem~\ref{thm:final}.  Let $\Delta_Q$ and $\Delta_M$ be the Fourier multipliers associated with the fixed cutoffs $\chi_Q$ and $\chi_M$.  Let $\Psi^j_{\Lambda,N}$ be the Gaussian block synthesized by $S_{j,\Lambda,N,\cdot}$ and the isonormal process $W_j$.  Suppose that the dyadic multiplier of $I_\lambda(t,s)$, together with the localized product symbol, satisfies the order-zero amplitude hypothesis above with deterministic envelope $\Kdet(N)\lesssim N^{-\lambda}$ on the admissible near-output sector.  The near-output Wick-centered paracontrolled block is, at finite cutoff, the operator
\begin{equation}\label{eq:pc-operator-formal}
 \cT^{\circ,{\rm near}}_\Lambda(w)(t)
 :=
 \sum_{(N,Q,M)\in\Triads}
 \int_0^t
 \Delta_M
 \Bigl[
 I_\lambda(t,s)\bigl(\Delta_Qw(s)\,\Psi^1_{\Lambda,N}(s)\bigr)
 \circ_{\rm loc}
 \Psi^2_{\Lambda,N}(t)
 \Bigr]^\circ\,\dd s.
\end{equation}
Here $[\,\cdot\,]^\circ$ means complete Wick centering of the two Gaussian factors before the operator estimate is applied.  Equivalently, the $(N,Q,M)$ block of~\eqref{eq:pc-operator-formal} is precisely
\begin{equation}\label{eq:pc-equals-B}
 \int_0^t \cB^\circ_{\Lambda,N,Q,M}(t,s)w_Q(s)\,\dd s,
\end{equation}
with the centered kernel of Section~\ref{sec:setup}.

If
\begin{equation}\label{eq:pc-intervals}
 \Gamma>\frac d2,
 \qquad
 \sigma\in I_{\rm in}:=
 \bigl(\max\{0,d-\Gamma\},\,\lambda+\Gamma-d\bigr),
 \qquad
 s<\lambda+\Gamma-d,
\end{equation}
then, for every finite $p\ge2$,
\begin{equation}\label{eq:pc-map}
 \cT^{\circ,{\rm near}}_\Lambda:
 C_TL^2\cap L_T^\infty B^\sigma_{2,\infty}
 \longrightarrow
 C_TH^{s-\lambda}\cap L_T^1B^{\sigma-\lambda}_{2,\infty}
\end{equation}
is uniformly bounded in $L^p(\Omega)$ and converges in $L^p$ operator norm.  If the Galerkin-state finite-state condition holds, it also converges on one full-probability event along the whole Galerkin cutoff sequence.

If the Volterra multiplier contributes $N^{-\lambda}$ and the two Gaussian covariance envelopes contribute $N^{-\alpha_1}$ and $N^{-\alpha_2}$, then the power-law exponent is
\begin{equation}\label{eq:pc-gamma-model}
 \Gamma=\lambda+\alpha_1+\alpha_2.
\end{equation}
Thus the near-output centered Wick block~\eqref{eq:pc-operator-formal} maps the input regularity interval
\begin{equation}\label{eq:pc-model-interval}
 \max\{0,d-\lambda-\alpha_1-\alpha_2\}<\sigma
 <2\lambda+\alpha_1+\alpha_2-d,
 \qquad
 s<2\lambda+\alpha_1+\alpha_2-d.
\end{equation}
In the equal-gain case $\lambda=\alpha_1=\alpha_2=\alpha$, this becomes
\begin{equation}\label{eq:pc-equal-gain}
 \sigma\in\bigl(\max\{0,d-3\alpha\},\,4\alpha-d\bigr),
 \qquad
 s<4\alpha-d.
\end{equation}
The interval is nonempty exactly when $\alpha>2d/7$; in dimension $d=3$ this is $\alpha>6/7$.  If $3\alpha>d$, the lower endpoint is $0$; for $d=3$ and $\alpha=1$, one may take any $0<\sigma<1$ and any $s<1$.  Deterministic covariance contractions and far-output tails are not included in this corollary.
\end{corollary}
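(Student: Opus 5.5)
The plan is to reduce Corollary~\ref{cor:paracontrolled-app} to Theorem~\ref{thm:final} by three steps: identify each dyadic block, verify the hypotheses, and compute the window.

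\textbf{Block identification.} Fix a triple $(N,Q,M)\in\Triads$ and a finite cutoff $\Lambda$. Write $\Psi^j_{\Lambda,N}$ through the synthesis maps, so that its Fourier coefficient at frequency $\ell$ (respectively $\eta$) is $W_j$ applied to $S_{j,\Lambda,N,\cdot}$ tested against the point frequency, understood weakly in $L^2_\ell$. Then expand $\Delta_M[I_\lambda(t,s)(\Delta_Qw\,\Psi^1)\circ_{\rm loc}\Psi^2]$ on the Fourier side. The physical cutoff $\varphi$ becomes convolution with $k=\widehat\varphi$, which produces the factor $k(n-q-\ell-\eta)$. The Volterra multiplier and the Bony resonant and paraproduct symbols, restricted to the near-output sector, are collected into $\Kdet(N)\,m_{\Lambda,N,Q,M,u}$, with $\Kdet(N)\lesssim N^{-\lambda}$. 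By the hypothesis of the corollary, $m$ obeys \eqref{eq:m-size}. Complete Wick centering removes the zeroth-chaos part of the product of the two Gaussian factors. What remains is exactly \eqref{eq:dec-block}, or \eqref{eq:wick-block} in the same-family case, applied to the pullback \eqref{eq:pullback} of the kernel \eqref{eq:base-kernel}. This establishes \eqref{eq:pc-equals-B}.

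\textbf{The main obstacle.} The delicate point is that the Bony symbols carry the dyadic cutoffs $\Delta_Q,\Delta_M$ and the resonance cutoff, which must be absorbed cleanly into $\rho_N,\chi_Q,\chi_M$ and an order-zero $m$. The symbol depends on $\ell$ and $\eta$ only at scale $N$, on $q$ at scale $Q$, and on $n$ at scale $M$. I would check on a fixed enlargement of the supports that derivatives obey the normalizations in \eqref{eq:symbol-norm}. This is the assumption the corollary grants, so the step reduces to bookkeeping that the resonant condition $|\ell|\sim|\eta|\sim N$ is already encoded by $\rho_N$ on both legs.

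\textbf{Application of Theorem~\ref{thm:final}.} With the identification in hand, \eqref{eq:G-envelope} gives
\[
\mathcal G(N)\lesssim N^{-\lambda}N^{-\alpha_1}N^{-\alpha_2},
\]
so \eqref{eq:power-envelope} holds with $\Gamma=\lambda+\alpha_1+\alpha_2$. This is \eqref{eq:pc-gamma-model}. The window \eqref{eq:pc-intervals} is \eqref{eq:lambda-window}, so Theorem~\ref{thm:final}(i) yields the uniform $L^p$ bounds and the $L^p$ convergence in \eqref{eq:pc-map}. The norm equivalence of $\Xspace_T^\sigma$ and $\Yspace_T^{\beta,\zeta}$ with the stated function spaces transfers the statement to those spaces. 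Under the finite-state condition, Theorem~\ref{thm:final}(ii) gives pathwise convergence on a single full-probability event.

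\textbf{Explicit window.} Substituting $\Gamma$ into \eqref{eq:lambda-window} gives the upper endpoint
\[
\lambda+\Gamma-d=2\lambda+\alpha_1+\alpha_2-d,
\]
which is \eqref{eq:pc-model-interval}. In the equal-gain case the interval becomes $(\max\{0,d-3\alpha\},4\alpha-d)$, with the standing requirement $\Gamma=3\alpha>d/2$. Nonemptiness requires both $4\alpha-d>0$ and $4\alpha-d>d-3\alpha$, that is, $\alpha>d/4$ and $\alpha>2d/7$. Since $2d/7$ is the larger of these and $2d/7>d/6$, the binding condition is $\alpha>2d/7$. For $d=3$ this gives $\alpha>6/7$. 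When $3\alpha>d$ the lower endpoint is $0$, and $d=3$, $\alpha=1$ gives the window $(0,1)$ together with $s<1$.
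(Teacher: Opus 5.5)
Your proposal is correct and follows the paper's own argument: you identify each $(N,Q,M)$ block with the centered kernel of Section~\ref{sec:setup} (soft incidence $k=\widehat\varphi$, envelope $\Kdet(N)\lesssim N^{-\lambda}$, pullback through the synthesis maps), apply Theorem~\ref{thm:final} through the norm equivalences for $\Xspace_T^\sigma$ and $\Yspace_T^{\beta,\zeta}$, and substitute $\Gamma=\lambda+\alpha_1+\alpha_2$ into \eqref{eq:lambda-window}. Your explicit nonemptiness check, which compares the thresholds $d/6$, $d/4$ and $2d/7$, is a welcome detail that the paper leaves implicit.
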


\begin{proof}
The localized dyadic decomposition of the near-output part of~\eqref{eq:pc-operator-formal} gives exactly the block identity~\eqref{eq:pc-equals-B}.  Hence~\eqref{eq:pc-map} is Theorem~\ref{thm:final} rewritten using the equivalence 
$\Xspace_T^\sigma=C_TL^2\cap L_T^\infty B^\sigma_{2,\infty}$ and the target identity in~\eqref{eq:Yspace}.  The explicit intervals~\eqref{eq:pc-model-interval} and~\eqref{eq:pc-equal-gain} are obtained by substituting~\eqref{eq:pc-gamma-model} into~\eqref{eq:pc-intervals}.
\end{proof}

\end{document}